\newtheorem{theorem}{Theorem}
\newtheorem{lemma}[theorem]{Lemma}
\newtheorem{proposition}[theorem]{Proposition}
\newtheorem{lettertheorem}{Theorem}
\newtheorem{letterlemma}[lettertheorem]{Lemma}
\theoremstyle{definition}
\theoremstyle{remark}
\numberwithin{equation}{section}
\newcommand{\D}{\mathbb{D}}
\newcommand{\DD}{\widehat{\mathcal{D}}}
\newcommand{\Dd}{\widecheck{\mathcal{D}}} 
\newcommand{\DDD}{\mathcal{D}}
\newcommand{\N}{\mathbb{N}}
\newcommand{\R}{\mathcal{R}}
\newcommand{\C}{\mathbb{C}}
\renewcommand{\phi}{\varphi}
       \def\b{\beta}        \def\g{\gamma}
     \def\om{\omega}      
\def\s{\sigma}       \def\t{\theta}       
                  \def\z{\zeta}
\begin{document}

\title[Radial two weight inequality for maximal Bergman projection]{Radial two weight inequality for maximal  Bergman projection induced by a regular weight}

\keywords{Bergman projection, Bergman space, regular weight, two weight inequality}

\thanks{This research was supported in part by Ministerio de Econom\'{\i}a y Competitivivad, Spain, projects
MTM2014-52865-P and MTM2015-69323-REDT; La Junta de Andaluc{\'i}a,
project FQM210; Academy of Finland project no. 268009.}

\date{\today}

\begin{abstract}
It is shown  in quantitative terms that the maximal Bergman projection
    $$
    P^+_\om(f)(z)=\int_\D f(\zeta)|B^\om_z(\zeta)|\om(\zeta)\,dA(\zeta),
    $$
is bounded from $L^p_\nu$ to $L^p_\eta$ if and only if
    \begin{equation*}\
    \sup_{0<r<1}\left(\int_0^r\frac{\eta(s)}{\left(\int_{s}^1\om(t)\,dt\right)^p}\,ds\right)^{\frac{1}{p}}
    \left(\int_r^1\left(\frac{\om(s)}{\nu(s)^\frac{1}{p}}\right)^{p'}ds\right)^{\frac{1}{p'}}<\infty,
    \end{equation*}
provided $\om,\nu,\eta$ are radial regular weights. A radial weight $\sigma$ is regular if it satisfies $\s(r)\asymp\int_{r}^1\s(t)\,dt/(1-r)$ for all $0\le r<1$.
It is also shown that under an  appropriate additional hypothesis involving $\om$ and $\eta$, the Bergman projection $P_\om$ and $P^+_\om$ are simultaneously bounded.
\end{abstract}

\author{Taneli Korhonen}
\address{University of Eastern Finland, P.O.Box 111, 80101 Joensuu, Finland}
\email{taneli.korhonen@uef.fi}

\author{Jos\'e \'Angel Pel\'aez}
\address{Departamento de An\'alisis Matem\'atico, Universidad de M\'alaga, Campus de Teatinos,   29071 M\'alaga, Spain}
\email{japelaez@uma.es}

\author{Jouni R\"atty\"a}
\address{University of Eastern Finland, P.O.Box 111, 80101 Joensuu, Finland}
\email{jouni.rattya@uef.fi}

\maketitle

\section{Introduction and main results}

A function $\om:\D\to[0,\infty)$, integrable over the unit disc $\D$, is called a weight. It  is radial if $\om(z) = \om(|z|)$ for all $z\in\D$. For $0<p<\infty$ and a weight $\om$, the Lebesgue space $L^p_\om$ consists of complex-valued measurable functions $f$ in $\D$ such that
    $$
    \|f\|_{L^p_\om}=\left(\int_\D|f(z)|^p\om(z)\,dA(z)\right)^{\frac1p}<\infty,
    $$
where $dA(z) = \frac{dx\,dy}\pi$ denotes the element of the normalized Lebesgue area measure on $\D$. The weighted Bergman space $A^p_\om$ is the space of analytic functions in $L^p_\om$. If the norm convergence in the Hilbert space $A^2_\om$ implies the uniform convergence on compact subsets of $\D$, the point evaluations are bounded linear functionals on $A^2_\om$. Therefore there exist reproducing Bergman kernels $B^\om_z\in A^2_\om$ such that
    $$
    f(z)=\langle f,B^\om_z \rangle_{A^2_\om}
    = \int_\D f(\zeta)\overline{B^\om_z(\zeta)}\om(\zeta)\,dA(\zeta),\quad z\in\D, \quad f\in A^2_\om.
    $$
The Hilbert space $A^2_\om$ is a closed subspace of $L^2_\om$, and hence the orthogonal projection from $L^2_\om$ to $A^2_\om$ is given by
    $$
    P_\om(f)(z) = \int_\D f(\zeta)\overline{B^\om_z(\zeta)}\om(\zeta)\,dA(\zeta),\quad z\in\D.
    $$
The operator $P_\om$ is the Bergman projection.

In this paper we will characterize the radial two-weight inequality
    \begin{equation}\label{eq:twoweight}
    \|P^+_\om(f)\|_{L^p_\eta}\le C \|f\|_{L^p_\nu},\quad f\in L^p_\nu,
    \end{equation}
for the maximal Bergman projection
    $
    P^+_\om(f)(z)=\int_\D f(\zeta)|B^\om_z(\zeta)|\om(\zeta)\,dA(\zeta)
    $
under certain smoothness requirements on the three radial weights involved. The question of when \eqref{eq:twoweight} is satisfied is an open problem even in the very particular case $\om=\nu=\eta$ if no preliminary hypotheses is imposed on the radial weight.

Two weight inequalities for classical operators have attracted a considerable amount of attention in Complex and Harmonic Analysis, and are closely connected to other interesting questions in the area \cite{AlPoRe,HyAn12,Laceyetal14,LaceyWicketal18,PelRatKernels,PRW}. The most commonly known result on Bergman projection is due to Bekoll\'e and Bonami~\cite{B1981,BB}, and concerns the case when $\nu=\eta$ is an arbitrary weight and the inducing weight $\om$ is standard, that is, of the form $\om(z) = (1-|z|^2)^\alpha$ for some $\alpha>-1$; see \cite{ACJFA12,PRW,PottRegueraJFA13} for recent extensions of this result. In this classical case, the Bergman reproducing kernel $B^\om_z(\z)$ is given by the neat formula $(1-\overline{z}\zeta)^{-(2+\alpha)}$. However, for a general radial weight $\om$ such explicit formulas for the kernels do not necessarily exist, and that is one of the main obstacles in tackling~\eqref{eq:twoweight}. Moreover, kernels induced by radial weights may have zeros, and that of course does not make things any easier. Nonetheless, \eqref{eq:twoweight} has been recently characterized in the particular case $\nu=\eta$ provided $\om$ and $\nu$ are regular weights~\cite{PelRatKernels}.

For a radial weight $\om$, we assume throughout the paper that $\widehat{\om}(z) = \int_{|z|}^1 \om(s)\,ds > 0$ for all $z \in \D$, for otherwise the Bergman space $A^p_\om$ would contain all analytic functions in $\D$. A radial weight $\om$ belongs to the class~$\DD$ if there exists a constant $C=C(\om)>1$ such that $\widehat{\om}(r)\le C\widehat{\om}(\frac{1+r}{2})$ for all $0\le r<1$. Moreover, if there exist $K=K(\om)>1$ and $C=C(\om)>1$ such that
    \begin{equation}\label{K}
    \widehat{\om}(r)\ge C\widehat{\om}\left(1-\frac{1-r}{K}\right),\quad 0\le r<1,
    \end{equation}
then we write $\om\in\Dd$. The intersection $\DD\cap\Dd$ is denoted by $\DDD$.
A radial weight $\om$ is regular if $\widehat{\om}(r)\asymp\om(r)(1-r)$ for all $0\le r<1$. The class of regular weights is denoted by $\R$,
and $\R\subsetneq\DDD$. For basic properties of these classes of weights and more, see~\cite{PelSum14,PR2014Memoirs,PR2018} and the references therein.

The main result of this study is the following theorem, which provides a quantitative description of the boundedness of $P^+_\om: L^p_\nu\to L^p_\eta$   in terms of a Muckenhoupt-type condition related to  weighted Hardy operators.

\begin{theorem}\label{th:P+threeweights}
Let $1<p<\infty$, $\om,\nu\in\R$ and $\eta\in\Dd$. Then $P^+_\om: L^p_\nu\to L^p_\eta$ is bounded if and only if
    \begin{equation}\label{eq:necessity3}
    M_p(\om,\nu,\eta)=\sup_{0<r<1}\left(\int_0^r\frac{\eta(s)}{\widehat{\om}(s)^p}\,ds\right)^{\frac{1}{p}}
    \left(\int_r^1\left(\frac{\om(s)}{\nu(s)^\frac{1}{p}}\right)^{p'}ds\right)^{\frac{1}{p'}}<\infty.
    \end{equation}
Moreover, $\|P^+_\om\|_{L^p_\nu\to L^p_\eta}\asymp M_p(\om,\nu,\eta)$.
\end{theorem}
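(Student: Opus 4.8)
The plan is to reduce the two-weight inequality \eqref{eq:twoweight} to the boundedness, on weighted $L^p$ spaces over $(0,1)$, of a one-dimensional integral operator, for which \eqref{eq:necessity3} is exactly the Muckenhoupt-type condition characterizing boundedness. The analytic input is a pair of two-sided pointwise estimates for the reproducing kernel, valid when $\om\in\R$, which follow from the series $B^\om_z(\zeta)=\sum_{n\ge0}(\bar z\zeta)^n/(2\om_{2n+1})$ together with the moment asymptotics $\om_{2n+1}=\int_0^1 s^{2n+1}\om(s)\,ds\asymp\widehat\om(1-\tfrac1n)$ available for $\om\in\DDD$; these are recorded in \cite{PelRatKernels}. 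Using $\widehat\om(r)\asymp\om(r)(1-r)$ one obtains $|B^\om_z(\zeta)|\lesssim 1/(\widehat\om(|z\zeta|)\,|1-\bar z\zeta|)$ for all $z,\zeta\in\D$, together with the matching lower bound $|B^\om_z(\zeta)|\gtrsim 1/(\widehat\om(|z\zeta|)(1-|z\zeta|))$ whenever $\bar z\zeta$ lies in a fixed Stolz angle with vertex at $1$; integrating in the argument of $\zeta$ these yield, with $r=|z|$ and $\rho=|\zeta|$,
$$
\int_0^{2\pi}\bigl|B^\om_z(\rho e^{i\theta})\bigr|\,d\theta\asymp\frac{1}{\widehat\om(r\rho)}.
$$

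Next I would transfer the problem to one dimension. Set $G(\rho)=\bigl(\int_0^{2\pi}|f(\rho e^{i\phi})|^p\,d\phi\bigr)^{1/p}$, so that $\|f\|_{L^p_\nu}^p\asymp\int_0^1 G(\rho)^p\nu(\rho)\rho\,d\rho$, and
$$
Sg(r)=\int_0^1\frac{g(\rho)\,\om(\rho)\,\rho}{\widehat\om(r\rho)}\,d\rho,\qquad 0<r<1.
$$
Using that $|B^\om_{re^{i\psi}}(\rho e^{i\phi})|$ depends only on $r\rho$ and $\phi-\psi$, Young's inequality for circular convolution in $\psi$ together with the upper kernel estimate gives, for each $r$, $\bigl(\int_0^{2\pi}|P^+_\om(f)(re^{i\psi})|^p\,d\psi\bigr)^{1/p}\lesssim SG(r)$, and hence $\|P^+_\om(f)\|_{L^p_\eta}\lesssim\|SG\|_{L^p((0,1),\eta\rho\,d\rho)}$. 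Conversely, for radial $f\ge0$ the lower kernel estimate gives $P^+_\om(f)(z)\asymp Sf(|z|)$. Therefore $\|P^+_\om\|_{L^p_\nu\to L^p_\eta}\asymp\|S\|_{L^p((0,1),\nu\rho\,d\rho)\to L^p((0,1),\eta\rho\,d\rho)}$, and the theorem reduces to proving $\|S\|\asymp M_p(\om,\nu,\eta)$.

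For the one-dimensional estimate, since $\om\in\R$ one has $1-r\rho\asymp 1-\min\{r,\rho\}$ once $r,\rho$ are bounded away from $0$, so $\widehat\om(r\rho)\asymp\widehat\om(\min\{r,\rho\})$ and, using $\om(\rho)/\widehat\om(\rho)\asymp(1-\rho)^{-1}$,
$$
Sg(r)\asymp\int_0^r\frac{g(\rho)\,\rho}{1-\rho}\,d\rho+\frac{1}{\widehat\om(r)}\int_r^1 g(\rho)\,\om(\rho)\,\rho\,d\rho=:S_1g(r)+S_2g(r),
$$
the contribution of $r$ or $\rho$ near $0$ being harmless. Both summands are classical weighted Hardy operators, so $S$ is bounded if and only if $S_1$ and $S_2$ are, and in each case the norm is comparable to the relevant Muckenhoupt constant. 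For $S_2$ this constant is literally $M_p(\om,\nu,\eta)$: writing $S_2g(r)=a(r)\int_r^1 g(\rho)b(\rho)\,d\rho$ with $a(r)=1/\widehat\om(r)$ and $b(\rho)=\om(\rho)\rho$, one has $\int_0^t a(r)^p\eta(r)\,dr\asymp\int_0^t\eta/\widehat\om^p$ and $\int_t^1 b(\rho)^{p'}\nu(\rho)^{1-p'}\,d\rho\asymp\int_t^1(\om/\nu^{1/p})^{p'}$. For $S_1$ the Muckenhoupt constant is $\sup_t(\int_t^1\eta)^{1/p}\bigl(\int_0^t(1-\rho)^{-p'}\nu(\rho)^{1-p'}\,d\rho\bigr)^{1/p'}$, and a separate argument — rewriting $(1-\rho)^{-p'}\asymp(\om/\widehat\om)^{p'}$ by regularity, integrating by parts against $d(\widehat\om^{1-p'})$, and using $\nu\in\R$ together with $\eta\in\Dd$ (a reverse-doubling-type condition on $\widehat\eta$ that permits truncated tail integrals of $\eta$ to be replaced by full ones) — shows this constant is also comparable to $M_p(\om,\nu,\eta)$. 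Combining the pieces gives $\|P^+_\om\|_{L^p_\nu\to L^p_\eta}\asymp\|S\|\asymp M_p(\om,\nu,\eta)$, as claimed.

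The main obstacle is the reproducing kernel itself: $B^\om_z$ has no closed form, may vanish, and only $|B^\om_z|$ is at our disposal, so the estimates above must be obtained from the series representation and the $\DDD$-moment asymptotics rather than from an explicit Forelli--Rudin-type computation. The other genuinely delicate point is the final weight bookkeeping — identifying the Muckenhoupt constant of the auxiliary operator $S_1$ with $M_p(\om,\nu,\eta)$ — which is precisely where the regularity of $\nu$ and the hypothesis $\eta\in\Dd$ enter.
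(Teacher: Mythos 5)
Your overall architecture --- reduce $P^+_\om$ to the one-dimensional operator $Sg(r)=\int_0^1 g(\rho)\om(\rho)\rho\,\widehat{\om}(r\rho)^{-1}d\rho$ via integrated means, split $S$ into the two Hardy-type pieces $S_1,S_2$ using $\widehat{\om}(r\rho)\asymp\widehat{\om}(\min\{r,\rho\})$, and invoke the classical two-weight Muckenhoupt characterization --- is sound and genuinely different from the paper, which never isolates a one-dimensional operator: it proves necessity with the test functions $f_{n,t}=\min\{n,(\om/\nu)^{1/(p-1)}\}\chi_{\D\setminus D(0,t)}$, and sufficiency by a Schur-type H\"older argument with the auxiliary weight $h=\nu^{1/p}\bigl(\int_{|\cdot|}^1(\om/\nu)^{p'}\nu\bigr)^{1/(pp')}$ plus a bespoke technical lemma (Lemma~\ref{le:nec3}). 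Your route, if completed, is arguably more transparent; the paper's avoids quoting the Hardy-inequality literature and yields the quantitative norm estimate directly. Your sketch of why the Muckenhoupt constant of $S_1$ is $\lesssim M_p(\om,\nu,\eta)$ is correct in outline: one first uses $\eta\in\Dd$ to get $N_p\lesssim M_p$ (the paper's inequality \eqref{eq:necessity3w1}), then integration by parts against $d(\widehat{\om}^{1-p'})$ and Lemma~\ref{Lemma:weights-in-R} for $\eta$ to control $\int_0^t(1-\rho)^{-1}\widehat{\eta}(\rho)^{1-p'}d\rho\lesssim\widehat{\eta}(t)^{1-p'}$. This is where the real work parallel to Lemma~\ref{le:nec3} sits, and it is only sketched, but it does go through.

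The genuine gap is in your kernel estimates. The upper bound $|B^\om_z(\z)|\lesssim\widehat{\om}(|z\z|)^{-1}|1-\bar z\z|^{-1}$ does \emph{not} integrate to $\int_0^{2\pi}|B^\om_z(\rho e^{i\t})|\,d\t\lesssim\widehat{\om}(r\rho)^{-1}$: since $\int_0^{2\pi}|1-xe^{i\t}|^{-1}d\t\asymp\log\frac{e}{1-x}$, it only gives $\widehat{\om}(r\rho)^{-1}\log\frac{e}{1-r\rho}$, and carrying that logarithm through the Hardy analysis would produce a strictly stronger sufficient condition than $M_p<\infty$, destroying the equivalence. The sharp pointwise upper bound would need $\widehat{\om}$ evaluated at $1-|1-\bar z\z|$ rather than at $|z\z|$. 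Likewise, the lower bound $|B^\om_z(\z)|\gtrsim\widehat{\om}(|z\z|)^{-1}(1-|z\z|)^{-1}$ on a full Stolz angle is not justified for general $\om\in\R$ (these kernels may vanish, and lower pointwise control off the positive real axis is exactly what is unavailable), although it is true on the positive real axis by positivity of the Taylor coefficients. Both problems disappear if you simply quote Theorem~\ref{th:kernelstimate}(i) with $p=1$, which for $\om\in\R$ gives the two-sided estimate $\int_0^{2\pi}|B^\om_z(\rho e^{i\t})|\,d\t\asymp 1+\int_0^{|z|\rho}\frac{dt}{\widehat{\om}(t)(1-t)}\asymp\widehat{\om}(|z|\rho)^{-1}$ directly; with that substitution (and Minkowski plus Young's convolution inequality for the upper reduction, radial test functions for the lower), your proof closes.
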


The key tools in the proof are the precise estimates for the $L^p$-means and $A^p_\nu$-norms of the Bergman kernel $B^\om_z$ obtained in \cite[Theorem~1]{PelRatKernels}. The special case of the said result is repeatedly used in the proof and stated for further reference as follows.

\begin{lettertheorem}\label{th:kernelstimate}
Let $0<p<\infty$ and $\om,\nu\in\DD$. Then the following assertions hold:
\begin{enumerate}
\item[\rm(i)]$\displaystyle M_p^p\left(r,B^\om_a\right)
=\frac{1}{2\pi}\int_0^{2\pi}|B^\om_z(re^{i\theta})|^p\,d\t\asymp
    \int_0^{|a|r}\frac{dt}{\widehat{\om}(t)^{p}(1-t)^{p}},\quad r,|a|\to1^-$;
\item[\rm(ii)]$\displaystyle
    \|B^\om_a\|^p_{A^p_\nu}
    \asymp \int_0^{|a|}\frac{\widehat{\nu}(t)}{\widehat{\om}(t)^{p}(1-t)^{p}}\,dt,\quad |a|\to1^-.
    $
\end{enumerate}
\end{lettertheorem}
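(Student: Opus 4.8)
The plan is to reduce both statements to sharp two-sided estimates for the integral means of a power series with positive, slowly varying coefficients, and then to pass from (i) to (ii) by Fubini's theorem. First I would use the orthogonality of the monomials in $A^2_\om$ to write
$$
B^\om_a(z)=\sum_{n=0}^\infty\frac{(\overline a z)^n}{2\om_{2n+1}},\qquad \om_{x}=\int_0^1 t^{x}\om(t)\,dt,
$$
so that, setting $w=\overline a z$ and $s=|a|r$, one has $M_p^p(r,B^\om_a)=M_p^p(s,g)$ where $g(w)=\sum_n b_nw^n$ and $b_n=1/(2\om_{2n+1})>0$. The bridge between the moments and $\widehat{\om}$ is the asymptotic $\om_{2n+1}\asymp\widehat{\om}(1-1/n)$, valid for $\om\in\DD$, which I would prove by writing $\om_x=x\int_0^1 t^{x-1}\widehat{\om}(t)\,dt$ (integration by parts) and observing that the mass of $t^{x-1}\,dt$ concentrates on $[1-1/x,1]$, where the doubling property $\widehat{\om}(r)\le C\widehat{\om}(\tfrac{1+r}2)$ forces $\widehat{\om}(t)\asymp\widehat{\om}(1-1/x)$. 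As a consequence the sequence $(b_n)$ is increasing and satisfies $b_n\asymp b_{2n}$.

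For the upper bound in (i) I would establish the pointwise majorant
$$
|g(se^{i\phi})|\lesssim\frac{1}{\delta\,\widehat{\om}(1-\delta)},\qquad \delta=|1-se^{i\phi}|\asymp(1-s)+|\phi|.
$$
Writing $N=\lfloor 1/\delta\rfloor$, the head $\sum_{n<N}b_nw^n$ is dominated by $\sum_{n<N}b_n\asymp Nb_N\asymp\delta^{-1}\widehat{\om}(1-\delta)^{-1}$, because $(b_n)$ is increasing and doubling, while the tail $\sum_{n\ge N}b_nw^n$ is handled by summation by parts against $1-w$, the factor $|1-w|^{-1}=\delta^{-1}$ together with $s^{N}\asymp1$ producing the same bound. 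Integrating the $p$-th power in $\phi$ and splitting at $|\phi|=1-s$ yields, after the substitution $t=1-\delta$,
$$
M_p^p(s,g)\lesssim\int_{-\pi}^{\pi}\frac{d\phi}{\delta^p\widehat{\om}(1-\delta)^p}\asymp\int_0^s\frac{dt}{\widehat{\om}(t)^p(1-t)^p},
$$
which is the asserted upper estimate.

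The lower bound is the main obstacle, precisely because the kernels may vanish, so a pointwise reverse of the majorant is impossible and one must argue in an averaged ($L^p$) sense. For $p=2$ nothing beyond Parseval is needed: $M_2^2(s,g)=\sum_n b_n^2s^{2n}\asymp\sum_{n\le 1/(1-s)}\widehat{\om}(1-1/n)^{-2}\asymp\int_0^s\widehat{\om}(t)^{-2}(1-t)^{-2}\,dt$, by the moment asymptotics and a sum-to-integral comparison. For $1<p<\infty$ I would decompose $g$ into dyadic frequency blocks $\Delta_kg(w)=\sum_{2^k\le n<2^{k+1}}b_nw^n$; on each block $b_n\asymp b_{2^k}\asymp\widehat{\om}(1-2^{-k})^{-1}$, so $\Delta_kg$ is comparable to a Dirichlet-type polynomial whose circle mean is explicitly computable, and the regularity $b_n\asymp b_{2n}$ guarantees the blocks are well separated, so Littlewood--Paley theory reconstructs $M_p^p(s,g)$ from the block contributions, whose sum reproduces $\int_0^s\widehat{\om}(t)^{-p}(1-t)^{-p}\,dt$. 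The delicate points, which is where I expect the real work to lie, are that this square-function mechanism is unavailable when $0<p\le1$ and that the possible zeros of $B^\om_a$ obstruct any local lower bound; in that range I would instead argue directly at the angular scale that dominates the target integral, pairing $g_s$ against a kernel localized simultaneously in frequency and in angle, and rely on the $\DD$-regularity of $(b_n)$ to control the interaction between neighbouring scales and to absorb the effect of the zeros.

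Finally, (ii) follows from (i) by integrating radially. Since
$$
\|B^\om_a\|_{A^p_\nu}^p=2\int_0^1 M_p^p(r,B^\om_a)\,\nu(r)\,r\,dr\asymp\int_0^1\left(\int_0^{|a|r}\frac{dt}{\widehat{\om}(t)^p(1-t)^p}\right)\nu(r)\,r\,dr,
$$
Fubini's theorem rewrites the right-hand side as $\int_0^{|a|}\widehat{\om}(t)^{-p}(1-t)^{-p}\bigl(\int_{t/|a|}^1\nu(r)\,r\,dr\bigr)\,dt$, and for $\nu\in\DD$ one has $\int_{t/|a|}^1\nu(r)\,r\,dr\asymp\widehat{\nu}(t/|a|)\asymp\widehat{\nu}(t)$ as $|a|\to1^-$, using that $r\asymp1$ on the relevant range together with the doubling of $\widehat{\nu}$. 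This gives $\|B^\om_a\|_{A^p_\nu}^p\asymp\int_0^{|a|}\widehat{\nu}(t)\,\widehat{\om}(t)^{-p}(1-t)^{-p}\,dt$, as claimed.
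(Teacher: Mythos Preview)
This statement is Theorem~A in the paper, and the paper does \emph{not} prove it: it is quoted as a special case of \cite[Theorem~1]{PelRatKernels} and used as a black box throughout. There is therefore no proof in the present paper against which to compare your proposal.

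That said, your outline is a plausible reconstruction of how such estimates are obtained. The ingredients you list---the moment asymptotic $\om_{2n+1}\asymp\widehat{\om}(1-1/n)$ (which is Lemma~\ref{Lemma:weights-in-D-hat-NEW}(iii) here), the pointwise majorant via head/tail splitting at the scale $N\asymp1/\delta$, Parseval for $p=2$, and a Hadamard-type dyadic block decomposition for general $p$---are indeed the standard tools, and your derivation of (ii) from (i) by Fubini together with the doubling of $\widehat{\nu}$ is correct. The one place your sketch remains genuinely incomplete is the lower bound for $0<p\le1$: you acknowledge this yourself, but ``pairing $g_s$ against a kernel localized simultaneously in frequency and in angle, and rely on the $\DD$-regularity of $(b_n)$ to control the interaction between neighbouring scales'' is not yet an argument, and this is exactly the step where the substantive work in the cited reference lies.
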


The argument used to establish the one weight inequality \cite[Theorem~3]{PelRatKernels} for regular weights does not carry over as such to the two weight case. The proof of the sufficiency in Theorem~\ref{th:P+threeweights} is much more involved due to the presence of the second weight $\eta$.

The operators $P_\om$ and $P^+_\om$ are simultaneously bounded under a natural additional hypothesis. This is the content of the other main result of this study.

\begin{theorem}\label{th:Pthreeweights}
Let $1<p<\infty$, $\om,\nu\in\R$ and $\eta\in\DDD$ such that
    \begin{equation}\label{Eq:hypothesis:P}
    \int_0^r\frac{\eta(t)}{\widehat{\om}(t)^p}\,dt\lesssim\frac{\widehat{\eta}(r)}{\widehat{\om}(r)^p},\quad 0\le r<1.
    \end{equation}
Then the following statements are equivalent:
    \begin{itemize}
    \item[(i)] $P^+_\om: L^p_\nu\to L^p_\eta$ is bounded;
    \item[(ii)] $P_\om: L^p_\nu\to L^p_\eta$ is bounded;
    \item[(iii)] $\displaystyle N_p(\om,\nu,\eta)=\sup_{0<r<1}\frac{\widehat{\eta}(s)^\frac1p}{\widehat{\om}(s)}
    \left(\int_r^1\left(\frac{\om(s)}{\nu(s)^\frac{1}{p}}\right)^{p'}ds\right)^{\frac{1}{p'}}<\infty$;
    \item[(iv)] $M_p(\om,\nu,\eta)<\infty$.
    \end{itemize}
Moreover,
    $$
    \|P^+_\om\|_{L^p_\nu\to L^p_\eta}\asymp\|P_\om\|_{L^p_\nu\to L^p_\eta}\asymp M_p(\om,\nu,\eta)\asymp N_p(\om,\nu,\eta).
    $$
\end{theorem}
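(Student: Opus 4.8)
\textbf{Proof proposal for Theorem~\ref{th:Pthreeweights}.}

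The plan is to establish the cycle of implications (i)$\Rightarrow$(iv)$\Rightarrow$(iii)$\Rightarrow$(ii)$\Rightarrow$(i), together with the quantitative comparisons. Two of the links are essentially free given what has already been set up: Theorem~\ref{th:P+threeweights} tells us that (i) is equivalent to $M_p(\om,\nu,\eta)<\infty$ with $\|P^+_\om\|_{L^p_\nu\to L^p_\eta}\asymp M_p(\om,\nu,\eta)$, which is precisely (iv); and (ii)$\Rightarrow$(i) is the place where the additional hypothesis \eqref{Eq:hypothesis:P} does its work, because in general $P^+_\om$ dominates $P_\om$ only in the pointwise sense $|P_\om(f)(z)|\le P^+_\om(|f|)(z)$, so the nontrivial direction is to recover boundedness of the bigger operator from that of the smaller one. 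So the substance of the theorem lies in (a) the equivalence of the two Muckenhoupt-type quantities $M_p(\om,\nu,\eta)$ and $N_p(\om,\nu,\eta)$, and (b) extracting boundedness of $P^+_\om$ from boundedness of $P_\om$ using \eqref{Eq:hypothesis:P}.

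For (a), the comparison $M_p(\om,\nu,\eta)\asymp N_p(\om,\nu,\eta)$, first note that the second factor in both suprema is identical, so it suffices to compare $\int_0^r \eta(s)\widehat{\om}(s)^{-p}\,ds$ with $\widehat{\eta}(r)\widehat{\om}(r)^{-p}$. The inequality $\lesssim$ is exactly the hypothesis \eqref{Eq:hypothesis:P}. For the reverse inequality $\gtrsim$, I would use that $\om\in\R$ (hence $\widehat{\om}(s)\asymp\om(s)(1-s)$ and, by the $\Dd$-part, $\widehat{\om}$ does not decay too fast, so $\widehat{\om}(s)\gtrsim\widehat{\om}(r)$ on a fixed Whitney-type interval to the left of $r$) together with $\eta\in\DDD$: splitting $\widehat{\eta}(r)=\int_r^1\eta$ and using $\widehat{\eta}(r)\asymp\widehat{\eta}(\tfrac{1+r}{2})$ from $\eta\in\DD$, one transfers the mass of $\widehat{\eta}(r)$ into an integral $\int_{r}^{(1+r)/2}\eta(s)\,ds$, and then compares $\widehat{\om}(r)^{-p}$ with $\widehat{\om}(s)^{-p}$ on that interval (where they are comparable since $\om\in\R$). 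This yields $\widehat{\eta}(r)\widehat{\om}(r)^{-p}\lesssim\int_{r}^{(1+r)/2}\eta(s)\widehat{\om}(s)^{-p}\,ds\le\int_0^1\cdots$; a small additional argument (monotonicity in $r$ after noticing the supremum can be taken over a cofinal set, or iterating the doubling) upgrades this to the bound by $\int_0^r\eta(s)\widehat{\om}(s)^{-p}\,ds$ for $r$ bounded away from $0$, which is all that matters for the supremum. This is routine weight-theory manipulation of the $\DD$, $\Dd$, $\R$ classes, using only the elementary properties referenced from \cite{PelSum14,PR2014Memoirs,PR2018}.

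For (b), the step (ii)$\Rightarrow$(i): assume $P_\om:L^p_\nu\to L^p_\eta$ is bounded. Since $(i)\Leftrightarrow(iv)$ is already in hand, it is enough to show $M_p(\om,\nu,\eta)\lesssim\|P_\om\|_{L^p_\nu\to L^p_\eta}$. The idea is the classical testing strategy: plug suitable test functions into $P_\om$ and read off the Muckenhoupt condition. One natural choice is $f=\nu^{-1/(p-1)}\om\,\chi_{E}$ (or a smoothed/analytic surrogate obtained from the kernel), supported on an annulus $E=\{r<|z|<1\}$; then $P_\om(f)(z)$ for $|z|$ small behaves, via the kernel estimates of Theorem~\ref{th:kernelstimate}, like a constant times $\int_r^1(\om(s)/\nu(s)^{1/p})^{p'}\,ds$, and computing its $L^p_\eta$-norm over a disc $\{|z|<r\}$ brings in $\int_0^r \eta(s)\widehat{\om}(s)^{-p}\,ds$ — exactly the two factors of $M_p$. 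The one delicate point is that $P_\om$, unlike $P^+_\om$, carries the (possibly sign-changing, possibly vanishing) kernel $\overline{B^\om_z(\zeta)}$ rather than its modulus, so one cannot simply test with a positive bump and expect no cancellation; here I would exploit that for $|z|$ small and $|\zeta|$ not too close to $1$ the kernel $B^\om_z(\zeta)$ is close to its value at the origin (it is $\asymp 1/\widehat{\om}(0)$ near $z=\zeta=0$ and has controlled oscillation by the $L^p$-mean estimates), so that on a region where $|z|$ is small the real part of $\overline{B^\om_z(\zeta)}$ is bounded below; alternatively, and more robustly, test with $f(\zeta)=B^\om_w(\zeta)|B^\om_w(\zeta)|^{p-2}\nu(\zeta)^{-1}\om(\zeta)^{\,?}$-type functions so that $P_\om(f)(w)$ reproduces a power of $\|B^\om_w\|$ and then invoke Theorem~\ref{th:kernelstimate}(i)--(ii) to convert norms of kernels into the integrals appearing in $M_p$ and $N_p$. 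The main obstacle I anticipate is precisely this: making the testing-function argument for $P_\om$ airtight in the presence of kernel sign changes and zeros, i.e. producing a lower bound for $|P_\om(f)|$ on a set large enough in the $\eta$-measure to recover the full supremum in $M_p$. Everything else — the equivalence $M_p\asymp N_p$, the norm comparison $\|P_\om\|\le\|P^+_\om\|$ from the pointwise domination, and closing the cycle — is comparatively soft once the kernel estimates of Theorem~\ref{th:kernelstimate} and the weight-class arithmetic are on the table.
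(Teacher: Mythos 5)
Your reduction of the theorem to two tasks, and most of the soft links, match the paper: (i)$\Leftrightarrow$(iv) is Theorem~\ref{th:P+threeweights}, the pointwise domination $|P_\om(f)|\le P^+_\om(|f|)$ gives (i)$\Rightarrow$(ii), the hypothesis \eqref{Eq:hypothesis:P} gives $M_p(\om,\nu,\eta)\lesssim N_p(\om,\nu,\eta)$, and the reverse bound $N_p\lesssim M_p$ is exactly the inequality \eqref{eq:necessity3w1} already established inside the proof of Theorem~\ref{th:P+threeweights}. One caveat on your sketch of that reverse bound: the mass transfer must use the $\Dd$-condition \eqref{K} to move $\widehat{\eta}(r)$ onto the interval $[1-K(1-r),r]$ \emph{to the left} of $r$, so that it lands inside $[0,r]$ where $\widehat{\om}(s)^{-p}\lesssim\widehat{\om}(r)^{-p}$ is available; the $\DD$-doubling $\widehat{\eta}(r)\asymp\widehat{\eta}(\frac{1+r}{2})$ you invoke does not bound $\int_r^{(1+r)/2}\eta(s)\,ds$ from below (that integral can vanish even for doubling $\widehat{\eta}$) and puts the mass on the wrong side of $r$ in any case.

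The genuine gap is the link from boundedness of $P_\om$ back to the testing condition, which you yourself flag as the unresolved obstacle. Testing $P_\om$ directly with a positive bump such as $f=\nu^{-1/(p-1)}\om\,\chi_{\{r<|z|<1\}}$ fails for precisely the reason you state: for $r$ close to $1$, which is the range that matters for the supremum, there is no lower bound on $\Real B^\om_z(\z)$, the "kernel close to its value at the origin" argument controls only a bounded range of $r$, and the $B^\om_w|B^\om_w|^{p-2}$-type test functions are left unworked. The paper sidesteps cancellation entirely by passing to the adjoint: it computes
$P^\star_\om(g)(\z)=\frac{\om(\z)}{\nu(\z)}\int_{\D}g(z)B^\om_z(\z)\eta(z)\,dA(z)$
and tests the bounded operator $P^\star_\om:L^{p'}_\eta\to L^{p'}_\nu$ with the monomials $g_n(\z)=\z^n$. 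Since $B^\om_z(\z)=\sum_{m}(\z\overline{z})^m/(2\om_{2m+1})$ and the weights are radial, orthogonality picks out a single term and yields the exact, sign-free identity $P^\star_\om(g_n)(\z)=\frac{\eta_{2n+1}}{\om_{2n+1}}\,\z^n\,\frac{\om(\z)}{\nu(\z)}$; Lemma~\ref{Lemma:weights-in-D-hat-NEW}(iii) then converts the moments $\om_{2n+1}$ and $\eta_{2n+1}$ into $\widehat{\om}(1-\frac1n)$ and $\widehat{\eta}(1-\frac1n)$ and gives $N_p(\om,\nu,\eta)\lesssim\|P_\om\|_{L^p_\nu\to L^p_\eta}$, which closes the cycle. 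This duality-plus-monomials device is the missing idea; without it, or some equivalent way of defeating the sign changes and zeros of $B^\om_z$, your step (b) does not go through.
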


Although the conditions $N_p(\om,\nu,\eta)<\infty$ and $M_p(\om,\nu,\eta)<\infty$ are equivalent for many weights, for example standard weights have this property, the condition $N_p(\om,\nu,\eta)<\infty$ is of course weaker than $M_p(\om,\nu,\eta)<\infty$ in general.
Indeed, if we pick up an arbitrary $\om\in\R$, and define $\nu(s)=\om(s)\widehat{\om}(s)^{\frac{p}{p'}}\left(\log\frac{e}{1-s}\right)^{2\frac{p}{p'}}$
and $\eta(s)=\om(s)\widehat{\om}(s)^{\frac{p}{p'}}\left( \log\frac{e}{1-s}\right)^{s\frac{p}{p'}}$, then one can show by using Lemmas~\ref{Lemma:weights-in-D-hat-NEW}
and~\ref{Lemma:weights-in-R} below that $N_p(\om,\nu,\eta)<\infty$ but $M_p(\om,\nu,\eta)=\infty$.

It is readily seen that the methods used to prove Theorems~\ref{th:P+threeweights} and~\ref{th:Pthreeweights} carry over to the case $p=1$. In fact, the proof in this case turns out much more simple for obvious reasons. To be precise, one can show that in the case $p=1$ the operators $P_\om$ and $P^+_\om$ are simultaneously bounded, and the uniform boundedness of the quantity
    $$
    \frac{\om(r)}{\nu(r)}\int_0^1\frac{\eta(t)}{\widehat{\om}(tr)}\,dt
    $$
is the characterizing condition.

Throughout the paper $\frac{1}{p}+\frac{1}{p'}=1$ for $1<p<\infty$. Further, the letter $C=C(\cdot)$ will denote an
absolute constant whose value depends on the parameters indicated
in the parenthesis, and may change from one occurrence to another.
We will use the notation $a\lesssim b$ if there exists a constant
$C=C(\cdot)>0$ such that $a\le Cb$, and $a\gtrsim b$ is understood
in an analogous manner. In particular, if $a\lesssim b$ and
$a\gtrsim b$, then we will write $a\asymp b$.

\section{Proof of Theorem~\ref{th:P+threeweights}}

Throughout the proofs we will repeatedly use several basic properties of weights in the classes $\DD$ and $\Dd$, gathered in the following two lemmas. For a proof of the first lemma, see~\cite[Lemma~2.1]{PelSum14}; the second one can be proved by similar arguments.

\begin{letterlemma}\label{Lemma:weights-in-D-hat-NEW}
Let $\om$ be a radial weight. Then the following statements are equivalent:
\begin{itemize}
\item[\rm(i)] $\om\in\DD$;
\item[\rm(ii)] There exist constants $C=C(\om)>0$ and $\beta=\beta(\om)>0$ such that
    $$
    \widehat{\om}(r)\leq C\left(\frac{1-r}{1-t}\right)^{\b}\widehat{\om}(t),\quad 0\le r\le t<1;
    $$
\item[\rm(iii)] There exists a constant $C=C(\om)>0$ such that
    $$
    \om_x=\int_0^1 s^x\om(s)\,ds\le C\widehat{\om}\left(1-\frac1x\right), \quad x \in [1,\infty).
    $$
\end{itemize}
\end{letterlemma}

\begin{letterlemma}\label{Lemma:weights-in-R}
Let $\om$ be a radial weight. Then $\om\in\Dd$ if and only if there exist $C=C(\om)>0$ and $\g=\g(\om)>0$ such that
    \begin{equation*}
    \begin{split}
    \widehat{\om}(t)\le C\left(\frac{1-t}{1-r}\right)^{\g}\widehat{\om}(r),\quad 0\le r\le t<1.
    \end{split}
    \end{equation*}
\end{letterlemma}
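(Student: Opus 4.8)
The plan is to prove the two implications separately, exploiting throughout that $\widehat{\om}$ is nonincreasing (so that $\widehat{\om}(t)\le\widehat{\om}(r)$ whenever $r\le t$), and mirroring the argument that establishes the analogous polynomial bound~(ii) for the class $\DD$ in Lemma~\ref{Lemma:weights-in-D-hat-NEW}.

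For the necessity of the polynomial estimate, suppose $\om\in\Dd$ with constants $K>1$ and $C>1$ as in~\eqref{K}. First I would iterate~\eqref{K} along the sequence $r_n$ defined by $r_0=r$ and $r_{n+1}=1-\frac{1-r_n}{K}$, so that $1-r_n=\frac{1-r}{K^n}$ and, since~\eqref{K} reads $\widehat{\om}(r_{n+1})\le\frac1C\widehat{\om}(r_n)$, one gets $\widehat{\om}(r_n)\le C^{-n}\widehat{\om}(r)$ for every $n$. Given $t\in[r,1)$, I would then choose the integer $n=\lfloor\log_K\frac{1-r}{1-t}\rfloor$, which forces $r_n\le t$; monotonicity of $\widehat{\om}$ yields $\widehat{\om}(t)\le\widehat{\om}(r_n)\le C^{-n}\widehat{\om}(r)$. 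The final step is the bookkeeping that turns the geometric factor $C^{-n}$ into the desired power of $\frac{1-t}{1-r}$: since $n\ge\log_K\frac{1-r}{1-t}-1$ one obtains $C^{-n}\le C\bigl(\frac{1-t}{1-r}\bigr)^{\log_K C}$, so the claim holds with $\g=\log_K C>0$ and a harmless extra constant absorbing the off-by-one in $n$.

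The sufficiency is immediate by specialization. Assuming the polynomial bound with constants $C>0$ and $\g>0$, I would set $t=1-\frac{1-r}{K}$ so that $\frac{1-t}{1-r}=\frac1K$, which gives $\widehat{\om}\bigl(1-\frac{1-r}{K}\bigr)\le CK^{-\g}\widehat{\om}(r)$. Fixing $K>1$ large enough that $CK^{-\g}\le\frac12$ (that is, $K\ge(2C)^{1/\g}$) then produces $\widehat{\om}(r)\ge 2\,\widehat{\om}\bigl(1-\frac{1-r}{K}\bigr)$ for all $0\le r<1$, which is exactly~\eqref{K}.

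The only genuinely delicate point is the iteration in the necessity direction, where the geometric decay rate $C^{-n}$ per step of the discrete scale $1-r_n=(1-r)K^{-n}$ must be converted, uniformly in both $r$ and $t$, into a continuous polynomial decay in $\frac{1-t}{1-r}$; the sandwiching of an arbitrary $t$ between consecutive scale points via the floor function, together with the monotonicity of $\widehat{\om}$, is what makes this transition clean.
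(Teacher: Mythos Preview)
Your proof is correct and is exactly the kind of argument the paper has in mind: the paper does not spell out a proof of this lemma but simply remarks that it ``can be proved by similar arguments'' to Lemma~\ref{Lemma:weights-in-D-hat-NEW}, and your dyadic iteration along the scales $1-r_n=(1-r)K^{-n}$ followed by the floor-function sandwiching is precisely that similar argument. There is nothing to add.
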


Lemma~\ref{Lemma:weights-in-D-hat-NEW}(ii) shows that if $\om\in\DD$, then there exists $\beta=\beta(\om)>0$ such that $\frac{\widehat{\om}(r)}{(1-r)^\b}$ is essentially increasing on $[0,1)$. Similarly, by Lemma~\ref{Lemma:weights-in-R}, $\frac{\widehat{\om}(r)}{(1-r)^\gamma}$ is essentially decreasing on $[0,1)$ for $\gamma=\gamma(\om)>0$ sufficiently small if $\om\in\Dd$.

\subsection{Necessity}

In this section we prove that $M_p(\om,\nu,\eta)<\infty$ is a necessary condition for $P^+_\om: L^p_\nu\to L^p_\eta$ to be bounded under the hypotheses of Theorem~\ref{th:P+threeweights}, and establish the desired upper estimate for the operator norm. This is done under slightly weaker hypotheses than those of the theorem in the following result by using an appropriate family of test functions depending on the weights $\om$ and $\nu$.

\begin{proposition}\label{pr:necessity3d}
Let $1<p<\infty$, $\om\in\DD $ and $\nu,\eta$ radial weights. If
$P^+_\om: L^p_\nu\to L^p_\eta$ is bounded, then
    \begin{equation*}
    \sup_{0<r<1}\left(\int_0^r\left(J_\om(s)+1\right)^p\eta(s)\,sds\right)^{\frac{1}{p}}
    \left(\int_r^1\left(\frac{\om(s)}{\nu(s)^\frac{1}{p}}\right)^{p'}\,sds\right)^{\frac{1}{p'}}\lesssim\|P^+_\om\|_{L^p_\nu\to L^p_\eta}<\infty,
    \end{equation*}
where $J_\om(s)=\int_0^{s}\frac{dt}{\widehat{\om}(t)(1-t)}$ for all $0\le s<1$.
\end{proposition}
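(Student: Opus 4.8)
The plan is to establish the necessity by testing the boundedness hypothesis against a carefully chosen family of test functions adapted to the weights $\om$ and $\nu$. The natural choice, motivated by the structure of the Muckenhoupt-type quantity and by Theorem~\ref{th:kernelstimate}, is to fix $r\in(0,1)$ and take the (normalized) test function $f_r(\z)=\chi_{\{|\z|>r\}}(\z)\left(\frac{\om(\z)}{\nu(\z)}\right)^{p'/p}\nu(\z)^{-1/p}$, or a radialized variant thereof; this makes $\|f_r\|_{L^p_\nu}^p\asymp\int_r^1(\om(s)/\nu(s)^{1/p})^{p'}\,sds$, the second factor in the claimed supremum (raised to the power $p$, after the exponent bookkeeping $p'/p\cdot p + 1 = p'$). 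Because $B^\om_z$ may vanish and has no closed form, the essential point is that $|B^\om_z(\z)|$ is still controlled from below in an integrated sense via part (i) of Theorem~\ref{th:kernelstimate}: the $L^p$-mean $M_p(s,B^\om_z)$ is comparable to $\left(\int_0^{|z|s}\frac{dt}{\widehat\om(t)^p(1-t)^p}\right)^{1/p}$, which is where the quantity $J_\om$ enters after one recognizes that $\int_0^{x}\frac{dt}{\widehat\om(t)^p(1-t)^p}$ behaves like $(J_\om(x)+1)^p$ up to constants when $\om\in\DD$ (this uses Lemma~\ref{Lemma:weights-in-D-hat-NEW}(ii), i.e.\ that $\widehat\om(t)/(1-t)^\b$ is essentially increasing).

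The key steps, in order, would be: (1) For fixed $r$, define the test function $f_r$ supported on $\{|\z|\ge r\}$ as above and compute $\|f_r\|_{L^p_\nu}$, getting the second factor. (2) Obtain a lower bound for $P^+_\om(f_r)(z)$ when $|z|\le r$: here $\om(\z)|B^\om_z(\z)|$ is integrated against $f_r$ over $|\z|>r$, and by a change to polar coordinates plus Fubini one isolates $\int_r^1 M_1(s,B^\om_z)\,(\text{weight factors})\,ds$ or, after Hölder in the angular variable, a quantity controlled below by $M_p$-means of the kernel; the aim is to show $P^+_\om(f_r)(z)\gtrsim \left(\int_0^{|z|}\frac{dt}{\widehat\om(t)^p(1-t)^p}\right)^{1/p}\cdot\left(\int_r^1(\om(s)/\nu(s)^{1/p})^{p'}\,sds\right)^{1/p'}$ for all $|z|\le r$, i.e.\ a uniform lower bound independent of $z$ on that disc, times exactly the second factor. (3) Then $\|P^+_\om(f_r)\|_{L^p_\eta}^p \gtrsim \int_0^r (J_\om(s)+1)^p \eta(s)\,sds \cdot \left(\int_r^1(\om(s)/\nu(s)^{1/p})^{p'}\,sds\right)^{p/p'}$, using $\int_{|z|<r}$ of the $p$-th power of the lower bound and the comparison between $\int_0^x\frac{dt}{\widehat\om(t)^p(1-t)^p}$ and $(J_\om(x)+1)^p$. (4) Dividing by $\|f_r\|_{L^p_\nu}^p = \left(\int_r^1(\om(s)/\nu(s)^{1/p})^{p'}\,sds\right)^{p/p'}\cdot(\text{const})$ — note the exponent matches — and taking the $p$-th root and the supremum over $r$ yields the asserted inequality with constant $\lesssim\|P^+_\om\|_{L^p_\nu\to L^p_\eta}$.

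The main obstacle I anticipate is step (2): producing a \emph{pointwise-in-$z$, uniform-on-$\{|z|\le r\}$} lower bound for $P^+_\om(f_r)(z)$ that decouples cleanly into a kernel factor depending only on $|z|$ and a weight factor equal to the second term of the supremum. The difficulty is twofold. First, $P^+_\om$ integrates $|B^\om_z(\z)|$, not the kernel itself, so one cannot invoke the reproducing property or any cancellation; one must genuinely lower-bound $\int |B^\om_z(\z)| (\cdots)\,dA(\z)$, and the only handle is the integrated estimate Theorem~\ref{th:kernelstimate}(i) for angular $L^p$-means, which after Hölder's inequality in $\theta$ (with exponents $p$ and $p'$) converts an $L^1$-angular integral into something bounded below by $M_p(s,B^\om_z)$ only up to a reverse use of Hölder that must be handled with care — likely one instead estimates $\int_0^{2\pi}|B^\om_z(se^{i\theta})|\,d\theta$ directly from below, which is not immediately $M_1 \gtrsim$ something, so a more delicate argument comparing $M_1$ and $M_p$ means for these specific kernels (again via \cite{PelRatKernels}) is needed. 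Second, one needs the lower bound to be \emph{uniform} over the whole disc $|z|\le r$, not just near $|z|=r$; since $\int_0^{|z|}\frac{dt}{\widehat\om(t)^p(1-t)^p}$ is increasing in $|z|$, the naive bound degenerates as $|z|\to 0$, and the ``$+1$'' in $(J_\om(s)+1)^p$ is precisely the term that absorbs this — so the estimate must be arranged to retain an additive constant, which is what guarantees $\int_0^r(J_\om(s)+1)^p\eta(s)\,sds \gtrsim \widehat\eta(\cdot)$-type control even when $r$ is small. Getting the bookkeeping of the two Hölder exponents $p'/p$ and the weight powers to land exactly on $\left(\int_r^1(\om/\nu^{1/p})^{p'}\right)^{1/p'}$ on both the numerator and denominator sides is routine but must be done attentively.
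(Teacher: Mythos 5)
Your overall strategy coincides with the paper's: test the boundedness against (essentially) $f=(\om/\nu)^{1/(p-1)}\chi_{\{|\z|>r\}}$ and convert the action of $P^+_\om$ on a radial function into integrated means of the kernel via Theorem~\ref{th:kernelstimate}. However, your steps (2)--(3) contain a genuine error that would derail the argument. You assert that $\int_0^x\frac{dt}{\widehat\om(t)^p(1-t)^p}$ ``behaves like $(J_\om(x)+1)^p$'' for $\om\in\DD$; this is false for $p\ne 1$ (for $\om\equiv1$ and $p=2$ the first quantity is $\asymp(1-x)^{-3}$ while $J_\om(x)^2\asymp(1-x)^{-2}$). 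Correspondingly, the intermediate bound you aim for, $P^+_\om(f_r)(z)\gtrsim\bigl(\int_0^{|z|}\frac{dt}{\widehat\om(t)^p(1-t)^p}\bigr)^{1/p}\cdot(\cdots)$, is neither needed nor obtainable: since $f_r$ is radial and nonnegative, $P^+_\om(f_r)(z)=2\int_0^1 f_r(s)\om(s)M_1(s,B^\om_z)\,s\,ds$, so the only handle is the $L^1$-mean, and the correct move is to apply Theorem~\ref{th:kernelstimate}(i) with exponent $1$ (not $p$), giving $M_1(s,B^\om_z)\gtrsim J_\om(|z|s)+1\ge J_\om(|z|^2)+1\asymp J_\om(|z|)+1$ for $s\ge|z|$ (the last step by Lemma~\ref{Lemma:weights-in-D-hat-NEW}(ii), and the ``$+1$'' from $|\frac{1}{2\pi}\int_0^{2\pi}B^\om_z(se^{i\t})\,d\t|=\frac{1}{2\om_1}$). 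The exponent $p$ then enters only when you compute $\|P^+_\om(f_r)\|_{L^p_\eta}^p$, which is exactly why the conclusion features $(J_\om(s)+1)^p$ rather than an $M_p$-type integral. The ``delicate comparison of $M_1$ and $M_p$ means'' you anticipate as the main obstacle simply does not arise, and no uniform-in-$z$ bound on $\{|z|\le r\}$ is needed: the factor $(J_\om(|z|)+1)$ stays inside the $\eta$-integral, and one only discards $\int_{|z|}^1\ge\int_t^1$ for $|z|\le t$.

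Two further points need repair. First, your normalization is off: with $f_r=(\om/\nu)^{p'/p}\nu^{-1/p}\chi$ one gets $\|f_r\|_{L^p_\nu}^p=\int(\om/\nu)^{p'}$, not $\int(\om/\nu^{1/p})^{p'}$; the correct choice is $f_r=(\om/\nu)^{1/(p-1)}\chi$, for which $f_r^p\nu=f_r\om=(\om/\nu^{1/p})^{p'}$. Second, you divide by $\|f_r\|_{L^p_\nu}^p$ without knowing it is finite (or that $\om/\nu$ is even defined). The paper handles both issues at once: it first shows $\om\,dA\ll\nu\,dA$ (else $\eta\equiv0$), truncates the test function to $f_{n,t}=\min\{n,(\om/\nu)^{1/(p-1)}\}\chi_{\D\setminus D(0,t)}\in L^p_\nu$, uses $f_{n,t}^p\nu\le f_{n,t}\om$ to get $\|P^+_\om\|^p\|f_{n,t}\|^p\gtrsim\bigl(\int_0^t(J_\om+1)^p\eta\,r\,dr\bigr)\|f_{n,t}\|^{p^2}$, divides, and lets $n\to\infty$ by monotone convergence. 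With these corrections your outline becomes the paper's proof.
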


\begin{proof}
Assume that $P^+_\om: L^p_\nu\to L^p_\eta$ is bounded, that is,
    \begin{equation}\label{1}
    \int_\D\left|\int_\D f(\z)|B^\om_z(\z)|\om(\z)\,dA(\z)\right|^p\eta(z)\,dA(z)
    \le\|P^+_\om\|_{L^p_\nu\to L^p_\eta}^p\|f\|_{L^p_\nu}^p,\quad f\in L^p_\nu,
    \end{equation}
with $\|P^+_\om\|_{L^p_\nu\to L^p_\eta}<\infty$. If $\nu$ vanishes on a set $E\subset\D$ of positive measure, then by choosing $f=\chi_{E}$ the right side of \eqref{1} is zero. It follows that $\om$ vanishes (almost everywhere) on $E$ or else $\eta\equiv0$ (almost everywhere) on $\D$. The latter option being unacceptable as $\widehat{\eta}(r)>0$ for all $0\le r <1$, we deduce that $\om dA$ is absolutely continuous with respect to $\nu dA$. Therefore $\om/\nu$ is well defined almost everywhere.
Hence, for each $n\in\N$ and $0\le t<1$, the function $f_{n,t}=\min\left\{n,\left(\frac{\om}{\nu}\right)^\frac1{p-1}\right\}\chi_{\D\setminus D(0,t)}$ belongs to $L^p_\nu$. An application of Lemma~\ref{Lemma:weights-in-D-hat-NEW}(ii) to $\om\in\DD$ gives $J_\om(r)+1\asymp J_\om(r^2)+1$ for all $0\le r <1$. Therefore \eqref{1} and  Theorem~\ref{th:kernelstimate}(i) imply
    \begin{equation*}
    \begin{split}
    \|P^+_\om\|_{L^p_\nu\to L^p_\eta}^p\|f_{n,t}\|^p_{L^p_\nu}
    &\gtrsim\int_0^1 \left(\int_r^1f_{n,t}(s)\om(s)\,sds\right)^p\eta(r)\left(J_\om(r)+1\right)^pr\,dr\\
    &\ge\left(\int_0^t\eta(r)\left(J_\om(r)+1\right)^pr\,dr\right)\left(\int_t^1f_{n,t}(s)\om(s)\,sds\right)^p\\
    &\gtrsim\left(\int_0^t\eta(r)\left(J_\om(r)+1\right)^pr\,dr\right)\|f_{n,t}\|^{p^2}_{L^p_\nu},\quad n\in\N,\quad 0\le t<1,
    \end{split}
    \end{equation*}
because $f_{n,t}^p\nu\le f_{n,t}\om$ on $\D$. This together with the monotone convergence theorem shows that
    \begin{equation*}
    \begin{split}
    \|P^+_\om\|_{L^p_\nu\to L^p_\eta}
    &\gtrsim\left(\int_0^t\eta(r)\left(J_\om(r)+1\right)^pr \,dr\right)^\frac1p
    \left(\int_t^1\left(\frac{\om(s)}{\nu(s)^\frac1p}\right)^{p'}\,sds\right)^{\frac1{p'}},\quad 0\le t<1,
    \end{split}
    \end{equation*}
and the proposition is proved.
\end{proof}

If $\om\in\DD$, then by Lemma~\ref{Lemma:weights-in-D-hat-NEW}(ii) there exists $\b=\b(\om)>0$ such that $J_\om(r)\gtrsim\widehat{\om}(r)^{-1}(1-(1-r)^\b)$ for all $0\le r <1$. Therefore, under the hypotheses of Theorem~\ref{th:P+threeweights}, we have $\|P^+_\om\|_{L^p_\nu\to L^p_\eta}\gtrsim M_p(\om,\nu,\eta)$, and thus the necessity part is proved.

\subsection{Sufficiency}

The proof of the sufficiency of $M_p(\om,\nu,\eta)<\infty$ for $P^+_\om: L^p_\nu\to L^p_\eta$ to be bounded is more involved than that of the necessity. We begin with the following technical lemma.

\begin{lemma}\label{le:nec3}
Let $1<p<\infty$ and $\om\in\DDD$ and $\nu\in\Dd$ such that
    $$
    \int_0^1
    \left(\frac{\widehat{\om}(s)}{\widehat{\nu}(s)^\frac{1}{p}}\right)^{p'}\frac{ds}{1-s}<\infty.
    $$
Then
    \begin{equation*}
    \int_0^r \left(\int_t^1
    \left(\frac{\widehat{\om}(s)}{\widehat{\nu}(s)^\frac{1}{p}}\right)^{p'}\frac{ds}{1-s}
    \right)^{\frac{1}{p'}}\frac{dt}{\widehat{\om}(t)(1-t)}\lesssim
    \left(
    \int_r^1\left(\frac{\widehat{\om}(s)}{\widehat{\nu}(s)^\frac{1}{p}}\right)^{p'}\frac{ds}{1-s}\right)^{\frac{1}{p'}}
    \frac{1}{\widehat{\om}(r)},\quad 0\le r<1.
    \end{equation*}
\end{lemma}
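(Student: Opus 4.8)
The plan is to prove Lemma~\ref{le:nec3} by a dyadic decomposition of the interval $[r,1)$ (and of $[0,r)$) combined with the fact that, for $\om\in\DDD=\DD\cap\Dd$ and $\nu\in\Dd$, both $\widehat{\om}(s)$ and $\widehat{\nu}(s)$ are comparable over each dyadic block $I_n=[1-2^{-n},1-2^{-n-1})$ to their values at the endpoints. Write $r_n=1-2^{-n}$ and pick $N=N(r)$ with $r\in I_N$. The key quantitative inputs are: $\widehat{\om}(r_n)/(1-r_n)^\b$ is essentially increasing and $\widehat{\om}(r_n)/(1-r_n)^\gamma$ is essentially decreasing for suitable $\b,\gamma>0$ (from Lemmas~\ref{Lemma:weights-in-D-hat-NEW}(ii) and~\ref{Lemma:weights-in-R}), and the analogous statement for $\widehat{\nu}$ with a possibly different exponent $\gamma_\nu$ since $\nu\in\Dd$; note that $\nu$ need not be in $\DD$, so we only get one-sided control for $\widehat{\nu}$, but that is exactly the direction needed because $\widehat{\nu}$ appears in the denominator.

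First I would set $H(t)=\int_t^1\bigl(\widehat{\om}(s)/\widehat{\nu}(s)^{1/p}\bigr)^{p'}\,ds/(1-s)$, so the convergence hypothesis is $H(0)<\infty$ and the claim is $\int_0^r H(t)^{1/p'}\,dt/(\widehat{\om}(t)(1-t))\lesssim H(r)^{1/p'}/\widehat{\om}(r)$. Discretizing, $H(r)\asymp\sum_{k\ge N} a_k$ where $a_k=\bigl(\widehat{\om}(r_k)/\widehat{\nu}(r_k)^{1/p}\bigr)^{p'}$, using that $\int_{I_k}ds/(1-s)\asymp1$ and that the integrand is comparable to its endpoint value on $I_k$ because $\om,\nu\in\Dd$ (and $\om\in\DD$) — here one must be a little careful: on $I_k$, $\widehat{\om}(s)^{p'}$ is comparable to $\widehat{\om}(r_k)^{p'}$ by both $\DD$ and $\Dd$ for $\om$, while $\widehat{\nu}(s)^{-p'/p}$ is comparable to $\widehat{\nu}(r_k)^{-p'/p}$ using only $\nu\in\Dd$ (which bounds $\widehat{\nu}(r_{k+1})$ from below by a constant times $\widehat{\nu}(r_k)$). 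Similarly $\int_0^r H(t)^{1/p'}\,dt/(\widehat{\om}(t)(1-t))\asymp\sum_{n\le N}\bigl(\sum_{k\ge n}a_k\bigr)^{1/p'}\widehat{\om}(r_n)^{-1}$, since $\int_{I_n}dt/(1-t)\asymp1$, $\widehat{\om}$ is essentially constant on $I_n$, and $H$ is essentially constant on $I_n$ (it changes by the single block term $a_n$, which is dominated by the tail $\sum_{k\ge n}a_k$). So the lemma reduces to the discrete inequality
    \begin{equation*}
    \sum_{n\le N}\Bigl(\sum_{k\ge n}a_k\Bigr)^{\frac{1}{p'}}\frac{1}{\widehat{\om}(r_n)}\lesssim\Bigl(\sum_{k\ge N}a_k\Bigr)^{\frac{1}{p'}}\frac{1}{\widehat{\om}(r_N)}.
    \end{equation*}

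To prove this discrete inequality, write $A_n=\sum_{k\ge n}a_k$ (a nonincreasing sequence in $n$, $A_n<\infty$ by hypothesis) and $b_n=1/\widehat{\om}(r_n)$. I would split the tail: for $n\le N$, $A_n=\sum_{n\le k<N}a_k+A_N$. The contribution of $A_N^{1/p'}\sum_{n\le N}b_n$ is handled by $\sum_{n\le N}b_n\lesssim b_N$, which holds because $\widehat{\om}\in\DD$ forces $\widehat{\om}(r_n)\gtrsim 2^{-(N-n)\b}\widehat{\om}(r_N)$, i.e. $b_n\lesssim 2^{-(N-n)\b}b_N$, a geometric sum; this gives exactly the right side. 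For the remaining part $\sum_{n\le N}\bigl(\sum_{n\le k<N}a_k\bigr)^{1/p'}b_n\le\sum_{n\le N}\sum_{n\le k<N}a_k^{1/p'}b_n$ (using subadditivity of $x\mapsto x^{1/p'}$ since $1/p'\le1$), swap the order of summation to get $\sum_{k<N}a_k^{1/p'}\sum_{n\le k}b_n\lesssim\sum_{k<N}a_k^{1/p'}b_k$, again by the geometric decay of $b_n$. Then one finishes with $\sum_{k<N}a_k^{1/p'}b_k\lesssim A_N^{1/p'}b_N$: this is the place where $\nu\in\Dd$ is used decisively. Indeed $a_k^{1/p'}b_k=\widehat{\om}(r_k)^{p'/p'}\widehat{\nu}(r_k)^{-1/p}\cdot\widehat{\om}(r_k)^{-1}=\widehat{\nu}(r_k)^{-1/p}$, so $\sum_{k<N}a_k^{1/p'}b_k=\sum_{k<N}\widehat{\nu}(r_k)^{-1/p}$, and since $\widehat{\nu}\in\Dd$ gives $\widehat{\nu}(r_k)\gtrsim 2^{-(N-k)\gamma_\nu}\widehat{\nu}(r_N)$, hence $\widehat{\nu}(r_k)^{-1/p}\lesssim 2^{-(N-k)\gamma_\nu/p}\widehat{\nu}(r_N)^{-1/p}$, summing the geometric series yields $\lesssim\widehat{\nu}(r_N)^{-1/p}$; and finally $\widehat{\nu}(r_N)^{-1/p}=a_N^{1/p'}b_N\le A_N^{1/p'}b_N$ since $a_N\le A_N$. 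Combining the two pieces proves the discrete inequality, hence the lemma.

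The main obstacle I anticipate is bookkeeping the direction of monotonicity for each weight: one must not accidentally invoke $\DD$ for $\nu$ (it is only assumed $\nu\in\Dd$). The structural reason the argument works is that $\widehat{\om}$ enters the final sum in a way that cancels — $a_k^{1/p'}b_k$ collapses to $\widehat{\nu}(r_k)^{-1/p}$ — so only the lower bound on $\widehat{\nu}$ near $1$ (i.e. $\nu\in\Dd$) is needed there, while $\om\in\DD$ is needed only to produce the geometric decay of $b_n=\widehat{\om}(r_n)^{-1}$ back from $r$ toward $0$. A secondary technical point is justifying that $H$ is essentially constant on each dyadic block, which requires the finiteness hypothesis so that the single-block increment $a_n$ is genuinely dominated by the tail $A_n$; this is automatic since $A_n\ge a_n$. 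None of these steps is hard individually, but writing them so that the constants depend only on $p$, $\om$, $\nu$ (and not on $r$) takes some care.
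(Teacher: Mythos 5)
Your dyadic approach is genuinely different from the paper's proof of Lemma~\ref{le:nec3}, which stays in the continuous setting: there the authors split $\widehat{\om}(t)^{-1}=\widehat{\om}(t)^{-\a}\cdot\widehat{\om}(t)^{-(1-\a)}$ for an auxiliary exponent $\a\in(1-\frac{\g}{p\b},1)$, apply H\"older's inequality, then Fubini, and absorb the resulting terms using the essential monotonicity of $\widehat{\om}(r)(1-r)^{-\b}$ and $\widehat{\nu}(r)(1-r)^{-\g}$. Your discretization replaces all of that with a discrete Hardy-type inequality whose heart is the cancellation $a_k^{1/p'}b_k=\widehat{\nu}(r_k)^{-1/p}$, after which only geometric decay along the dyadic scale is needed; this avoids the auxiliary exponent entirely and is more transparent about where each hypothesis enters. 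The architecture is correct and every inequality you ultimately need is true under the stated hypotheses.

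However, you have the bookkeeping backwards in exactly the places you warned yourself about, and two steps are justified by claims that are either useless or false as stated. First, $\sum_{n\le N}b_n\lesssim b_N$ does \emph{not} follow from $\om\in\DD$: Lemma~\ref{Lemma:weights-in-D-hat-NEW}(ii) only gives $\widehat{\om}(r_n)\lesssim 2^{(N-n)\b}\widehat{\om}(r_N)$, i.e.\ an upper bound on $b_N/b_n$, and for $\widehat{\om}(r)=\left(\log\frac{e}{1-r}\right)^{-1}$ (a weight in $\DD\setminus\Dd$) one has $\sum_{n\le N}b_n\asymp N^2\gg N\asymp b_N$. What you need is $\widehat{\om}(r_n)\gtrsim 2^{(N-n)\g}\widehat{\om}(r_N)$, which is Lemma~\ref{Lemma:weights-in-R} for $\om\in\Dd$ --- available since $\om\in\DDD$, but you must cite that half of the hypothesis; the same correction applies to $\sum_{n\le k}b_n\lesssim b_k$, and your displayed $\widehat{\nu}(r_k)\gtrsim2^{-(N-k)\g_\nu}\widehat{\nu}(r_N)$ has the wrong sign in the exponent (as written it is trivially true by monotonicity and gives nothing; $\Dd$ supplies the exponent $+(N-k)\g_\nu$ that the geometric series requires). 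Second, two-sided comparability of $\widehat{\nu}$ on a block $I_k$ is \emph{not} a consequence of $\nu\in\Dd$ --- it is equivalent to $\nu\in\DD$, which is not assumed, and $\Dd$ bounds $\widehat{\nu}(r_{k+1})$ from \emph{above} by a constant multiple of $\widehat{\nu}(r_k)$, not from below. You only need the comparability for the upper bound $H(r_n)\lesssim\sum_{k\ge n}a_k$, and that survives without it: on $I_k$ monotonicity gives $\widehat{\nu}(s)^{-p'/p}\le\widehat{\nu}(r_{k+1})^{-p'/p}$, so $\int_{I_k}\lesssim a_{k+1}$ by $\om\in\DD$, and the shifted sum is dominated by the unshifted one. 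With these repairs the argument closes.
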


\begin{proof}
Let $\alpha=\alpha(\om,\nu,p)\in(0,1)$ to be appropriately fixed later. Then H\"older's inequality and Lemma~\ref{Lemma:weights-in-R} yield
    \begin{equation}
    \begin{split}\label{eq:nec1}
    &\int_0^r\left(\int_t^1
    \left(\frac{\widehat{\om}(s)}{\widehat{\nu}(s)^\frac{1}{p}}\right)^{p'}\frac{ds}{1-s}
    \right)^{\frac{1}{p'}}\frac{dt}{\widehat{\om}(t)(1-t)}\\
    &\le\left(\int_0^r\left(\int_t^1
    \left(\frac{\widehat{\om}(s)}{\widehat{\nu}(s)^{\frac{1}{p}}}
    \right)^{p'}\frac{ds}{1-s}\right)
    \frac{dt}{\widehat{\om}(t)^{p'\alpha}(1-t)}\right)^{\frac{1}{p'}}
    \left(\int_0^r\frac{dt}{\widehat{\om}(t)^{p(1-\alpha)}(1-t)}\right)^{\frac{1}{p}}\\
    &\lesssim\left(\int_0^r\left(\int_t^1
    \left(\frac{\widehat{\om}(s)}{\widehat{\nu}(s)^{\frac{1}{p}}}
    \right)^{p'}\frac{ds}{1-s}\right)
    \frac{dt}{\widehat{\om}(t)^{p'\alpha}(1-t)}
    \right)^{\frac{1}{p'}}\frac{1}{\widehat{\om}(r)^{(1-\alpha)}},\quad 0\le r<1,
    \end{split}
    \end{equation}
where, by Fubini's theorem and Lemma~\ref{Lemma:weights-in-R},
    \begin{equation}
    \begin{split}\label{eq:nec2}
    &\int_0^r\left(\int_t^1
    \left(\frac{\widehat{\om}(s)}{\widehat{\nu}(s)^{\frac{1}{p}}}
    \right)^{p'}\frac{ds}{1-s}\right)
    \frac{dt}{\widehat{\om}(t)^{p'\alpha}(1-t)}\\
    &=\int_0^r\left(\frac{\widehat{\om}(s)}{\widehat{\nu}(s)^{\frac{1}{p}}}
    \right)^{p'}\left(\int_0^s
    \frac{dt}{\widehat{\om}(t)^{p'\alpha}(1-t)}\right)\frac{ds}{1-s}\\
    &\quad+\left(\int_r^1\left(\frac{\widehat{\om}(s)}{\widehat{\nu}(s)^{\frac{1}{p}}}\right)^{p'}\frac{ds}{1-s}\right)
    \left(\int_0^r\frac{dt}{\widehat{\om}(t)^{p'\alpha}(1-t)}\right)\\
    &\lesssim\int_0^r
    \left(\frac{\widehat{\om}(s)^{1-\alpha}}{\widehat{\nu}(s)^{\frac{1}{p}}}\right)^{p'}\frac{ds}{1-s}
    +\left(\int_r^1
    \left(\frac{\widehat{\om}(s)}{\widehat{\nu}(s)^{\frac{1}{p}}}
    \right)^{p'}\frac{ds}{1-s}\right)
    \frac{1}{\widehat{\om}(r)^{p'\alpha}},\quad 0\le r<1.
    \end{split}
    \end{equation}
The latter term is of the desired form. To deal with the first term, observe first that by Lemma~\ref{Lemma:weights-in-D-hat-NEW}(ii) there exists a constant $\b=\b(\om)>0$ such that $\frac{\widehat{\om}(r)}{(1-r)^{\b}}$ is essentially increasing on $[0,1)$. Further, for each sufficiently small $\gamma=\gamma(\nu)>0$ the function $\frac{\widehat{\nu}(r)}{(1-r)^{\gamma}}$ is essentially decreasing on $[0,1)$ by Lemma~\ref{Lemma:weights-in-R}. Pick up such a $\gamma$ from the interval $(0,p\b)$, and fix $\alpha\in(1-\frac{\gamma}{p\b},1)$. Then
    \begin{equation*}
    \begin{split}
    \int_0^r\left(\frac{\widehat{\om}(s)^{1-\alpha}}{\widehat{\nu}(s)^{\frac{1}{p}}}
    \right)^{p'}\frac{ds}{1-s}
    &\lesssim\left(\frac{\widehat{\om}(r)^{1-\alpha}}{(1-r)^{\b(1-\alpha)}}\right)^{p'}
    \int_0^r\left(\frac{(1-s)^{\b(1-\alpha)}}{\widehat{\nu}(s)^{\frac{1}{p}}}\right)^{p'}\frac{ds}{1-s}\\
    &\lesssim\left(\frac{\widehat{\om}(r)^{1-\alpha}}{(1-r)^{\b(1-\alpha)}}\right)^{p'}
    \left(\frac{(1-r)^{\frac{\g}{p}}}{\widehat{\nu}(r)^{\frac{1}{p}}}\right)^{p'}
    \int_0^r\frac{ds}{(1-s)^{1+p'\left(\frac{\g}{p}-\b(1-\alpha)\right)}}\\
    &\lesssim\left(\frac{\widehat{\om}(r)^{1-\alpha}}{\widehat{\nu}(r)^{\frac{1}{p}}}\right)^{p'}
    \lesssim\left(\int_r^{\frac{1+r}{2}}\left(\frac{\widehat{\om}(s)}{\widehat{\nu}(s)^{\frac{1}{p}}}\right)^{p'}\frac{ds}{1-s}\right)
    \frac{1}{\widehat{\om}(r)^{p'\alpha}}\\
    &\lesssim\left(\int_r^1\left(\frac{\widehat{\om}(s)}{\widehat{\nu}(s)^{\frac{1}{p}}}\right)^{p'}\frac{ds}{1-s}\right) \frac{1}{\widehat{\om}(r)^{p'\alpha}},\quad 0\le r<1,
    \end{split}
    \end{equation*}
which together with \eqref{eq:nec2} gives
    \begin{equation*}
    \begin{split}
    \int_0^r \left( \int_t^1
    \left(\frac{\widehat{\om}(s)}{\widehat{\nu}(s)^{\frac{1}{p}}}\right)^{p'}\frac{ds}{1-s}\right)
    \frac{dt}{\widehat{\om}(t)^{p'\alpha}(1-t)}
    \lesssim\left(\int_r^1\left(\frac{\widehat{\om}(s)}{\widehat{\nu}(s)^{\frac{1}{p}}}
    \right)^{p'}\frac{ds}{1-s}\right)\frac{1}{\widehat{\om}(r)^{p'\alpha}},\quad 0\le r<1.
    \end{split}
    \end{equation*}
Finally, by combining the above inequality with \eqref{eq:nec1} we
obtain the claim.
\end{proof}

We are now ready to prove the sufficiency part of Theorem~\ref{th:P+threeweights}. To do this, assume $M_p(\om,\nu,\eta)<\infty$, and observe that then the function $h(z)=\nu^{1/p}(z)\left(\int_{|z|}^1\left(\frac{\om(s)}{\nu(s)}\right)^{p'}\nu(s)\,ds\right)^{\frac{1}{pp'}}$ is well defined for all $z\in\D$. Hence an integration shows that
    \begin{equation}
    \begin{split}\label{eq:mu1}
    \int_{t}^1 \left(\frac{\om(s)}{h(s)}\right)^{p'}ds
    =p'\left(\int_t^1\left(\frac{\om(s)}{\nu(s)}\right)^{p'}\nu(s)\,ds\right)^{\frac{1}{p'}},\quad 0\le t<1.
    \end{split}
    \end{equation}
H\"older's inequality yields
    \begin{equation}
    \begin{split}\label{eq:mu2}
    \|P^+_\om(f)\|^p_{L^p_\eta}
    &\le\int_{\D}\left(\int_{\D}|f(\z)|^ph(\z)^p|B^\om_z(\z)|\,dA(\z)\right)\\
    &\quad\cdot\left(\int_{\D}|B^\om_z(\z)|\left(\frac{\om(\z)}{h(\z)}\right)^{p'}dA(\z)\right)^{p/p'}\eta(z)\,dA(z),
    \end{split}
    \end{equation}
where, by Theorem~\ref{th:kernelstimate}(i), Fubini's theorem and \eqref{eq:mu1},
    \begin{equation*}
    \begin{split}
    \int_{\D}|B^\om_z(\z)|\left(\frac{\om(\z)}{h(\z)}\right)^{p'}dA(\z)
    &\lesssim \int_0^{1}\left(\frac{\om(s)}{h(s)}\right)^{p'}\left(\int_0^{s|z|}\frac{dt}{\widehat{\om}(t)(1-t)}\right)\,ds+1\\
    &=\int_0^{|z|}\left(\int_{t/|z|}^1\left(\frac{\om(s)}{h(s)}\right)^{p'}ds\right)\frac{dt}{\widehat{\om}(t)(1-t)}+1\\
    &\le\int_0^{|z|}\left(\int_{t}^1\left(\frac{\om(s)}{h(s)}\right)^{p'}ds\right)\frac{dt}{\widehat{\om}(t)(1-t)}+1\\
    &=p'\int_0^{|z|}\left(\int_t^1\left(\frac{\om(s)}{\nu(s)}\right)^{p'}\nu(s)\,ds\right)^{\frac{1}{p'}}\frac{dt}{\widehat{\om}(t)(1-t)}+1.
    \end{split}
    \end{equation*}
This together with \eqref{eq:mu2}, Fubini's theorem and another
application of Theorem~\ref{th:kernelstimate}(i) gives
    \begin{equation}
    \begin{split}\label{eq:mu4}
    \|P^+_\om(f)\|^p_{L^p_\eta}
    &\lesssim
    \int_{\D}\left(\int_{\D}|f(\z)|^ph(\z)^p|B^\om_z(\z)|\,dA(\z)\right)\\
    &\quad \cdot\left(\int_0^{|z|}\left(\int_t^1\left(\frac{\om(s)}{\nu(s)}\right)^{p'}\nu(s)\,ds\right)^{\frac{1}{p'}}\frac{dt}{\widehat{\om}(t)(1-t)}\right)^{p/p'}
    \eta(z)\,dA(z)+I_1(f)\\
    &\lesssim\int_{\D}|f(\z)|^ph(\z)^{p}
    \Bigg[\int_0^1
    \left(\int_0^{r}\left(\int_t^1\left(\frac{\om(s)}{\nu(s)}\right)^{p'}\nu(s)\,ds\right)^{\frac{1}{p'}}
    \frac{dt}{\widehat{\om}(t)(1-t)}\right)^{p/p'}\\
    &\quad\cdot\left(\int_0^{r|\z|}\frac{dx}{\widehat{\om}(x)(1-x)}\right)\eta(r)r\,dr\Bigg]\,dA(\z)+I_1(f)+I_2(f),
    \end{split}
    \end{equation}
where
    \begin{equation*}
    \begin{split}
    I_1(f)
    &=\int_\D\left(\int_\D|f(\z)|^ph(\z)^p|B^\om_z(\z)|\,dA(\z)\right)\eta(z)\,dA(z)\\
    \end{split}
    \end{equation*}
and
    \begin{equation*}
    \begin{split}
    I_2(f)&=\int_{\D}|f(\z)|^ph(\z)^{p}
    \left(\int_0^1
    \left(\int_0^{r}\left(\int_t^1\left(\frac{\om(s)}{\nu(s)}\right)^{p'}\nu(s)\,ds\right)^{\frac{1}{p'}}
    \frac{dt}{\widehat{\om}(t)(1-t)}\right)^{p/p'}\eta(r)r\,dr\right)\,dA(\z).
    \end{split}
    \end{equation*}
By Fubini's theorem, Theorem~\ref{th:kernelstimate}(ii) H\"older's inequality,
    \begin{equation*}
    \begin{split}
    I_1(f)
    &=\int_\D\left(\int_\D|f(\z)|^ph(\z)^p|B^\om_z(\z)|\,dA(\z)\right)\eta(z)\,dA(z)\\
    &\lesssim\int_\D|f(\z)|^p\nu(\z)\left(\int_{|\z|}^1\left(\frac{\om(s)}{\nu(s)}\right)^{p'}\nu(s)\,ds\right)^\frac1{p'}
    \left(\int_0^{|\z|}\frac{\widehat{\eta}(t)}{\widehat{\om}(t)(1-t)}\,dt\right)+\|f\|_{L^p_\nu}^p\\
    &\lesssim M_p(\om,\nu,\eta) \int_\D|f(\z)|^p\nu(\z)\left(\int_{|\z|}^1\left(\frac{\om(s)}{\nu(s)}\right)^{p'}\nu(s)\,ds\right)^\frac1{p'}
    \left(\int_0^{|\z|}\frac{\widehat{\eta}(t)}{\widehat{\om}(t)^p(1-t)}\,dt\right)^\frac1p+\|f\|_{L^p_\nu}^p\\
    &\lesssim M_p(\om,\nu,\eta)\|f\|_{L^p_\nu}^p
    \end{split}
    \end{equation*}
because $\widehat{\eta}(r)/(1-|\z|)$ is a weight by the hypothesis $\eta\in\Dd$ and Lemma~\ref{Lemma:weights-in-R}. Fubini's theorem and Lemma~\ref{Lemma:weights-in-R} for $\om\in\Dd$ give
    $$
    \int_0^{|\z|}\frac{\widehat{\eta}(t)}{\widehat{\om}(t)^p(1-t)}\,dt
    \lesssim\frac{\widehat{\eta}(\z)}{\widehat{\om}(\z)^p}+\int_0^{|\z|}\frac{\eta(t)}{\widehat{\om}(t)^p}\,dt,\quad \z\in\D.
    $$
But since $\eta\in\Dd$ by the hypothesis, there exists a constant $K=K(\eta)>1$ such that $\eta$ satisfies \eqref{K}. Hence, by using Lemma~\ref{Lemma:weights-in-D-hat-NEW}(ii) for $\om\in\DD$, we deduce
    $$
    \int_0^r\frac{\eta(s)}{\widehat{\om}(s)^p}\,ds\ge \int_{1-K(1-r)}^r\frac{\eta(s)}{\widehat{\om}(s)^p}\,ds
    \ge\frac{\int_{1-K(1-r)}^r \eta(s)\,ds}{\widehat{\om}(1-K(1-r))^p}\gtrsim \frac{\widehat{\eta}(r)}{\widehat{\om}(r)^p},\quad r\ge 1-K^{-1},
    $$
and hence
    \begin{equation}\label{eq:necessity3w1}
    \sup_{0<r<1}
    \frac{\widehat{\eta}(r)^{\frac{1}{p}}}{\widehat{\om}(r)}
    \left(\int_r^1\left(\frac{\om(s)}{\nu(s)^\frac{1}{p}}\right)^{p'}\,ds\right)^{\frac{1}{p'}}\lesssim M_p(\om,\nu,\eta)<\infty.
    \end{equation}
It follows that $I_1(f)\lesssim \|f\|_{L^p_\nu}^p$. To deal with the remaining terms, we split the integral over $(0,1)$ in \eqref{eq:mu4} into two parts at $|\z|$. On one hand, since $\om,\eta\in\Dd$, Lemma~\ref{Lemma:weights-in-R} and \eqref{eq:necessity3w1} yield
    \begin{equation}
    \begin{split}\label{eq:mu5}
    &\int_{|\z|}^1
    \left(\int_0^{r}\left(\int_t^1\left(\frac{\om(s)}{\nu(s)}\right)^{p'}\nu(s)\,ds\right)^{\frac{1}{p'}}\frac{dt}{\widehat{\om}(t)(1-t)}\right)^{p/p'}
    \left(\int_0^{r|\z|}\frac{dx}{\widehat{\om}(x)(1-x)}\right)\eta(r)\,dr\\
    &\lesssim\frac{1}{\widehat{\om}(\z)}
    \int_{|\z|}^1\left(\int_0^{r}
    \left(\int_t^1\left(\frac{\om(s)}{\nu(s)}\right)^{p'}\nu(s)\,ds\right)^{\frac{1}{p'}}\frac{dt}{\widehat{\om}(t)(1-t)}\right)^{p/p'}\eta(r)\,dr\\
    &\lesssim M^{\frac{p}{p'}}_p(\om,\nu,\eta)\frac{1}{\widehat{\om}(\z)}
    \int_{|\z|}^1\left(\int_0^{r}\frac{dt}{\widehat{\eta}(t)^{\frac{1}{p}}(1-t)}\right)^{p/p'}\eta(r)\,dr\\
    &\asymp M^{\frac{p}{p'}}_p(\om,\nu,\eta)\frac{1}{\widehat{\om}(\z)}\int_{|\z|}^1\eta(r)\widehat{\eta}(r)^{-\frac{1}{p'}}dr
    = M^{\frac{p}{p'}}_p(\om,\nu,\eta)p\frac{\widehat{\eta}(\z)^{\frac{1}{p}}}{\widehat{\om}(\z)},\quad \z\in\D.
    \end{split}
    \end{equation}
Therefore, by using \eqref{eq:necessity3w1} again we deduce
    \begin{equation*}
    \begin{split}
    &h^p(\z)\int_{|\z|}^1
    \left(\int_0^{r}\left(\int_t^1\left(\frac{\om(s)}{\nu(s)}\right)^{p'}\nu(s)\,ds\right)^{\frac{1}{p'}}\frac{dt}{\widehat{\om}(t)(1-t)}\right)^{p/p'}
    \left(\int_0^{r|\z|}\frac{dx}{\widehat{\om}(x)(1-x)}\right)\eta(r)r\,dr\\
    &\lesssim M^{\frac{p}{p'}}_p(\om,\nu,\eta)\nu(\z)
    \left(\int_{|\z|}^1\left(\frac{\om(s)}{\nu(s)}\right)^{p'}\nu(s)\,ds\right)^{\frac{1}{p'}}
    \frac{\widehat{\eta}(\z)^{\frac{1}{p}}}{\widehat{\om}(\z)}\lesssim M^{p}_p(\om,\nu,\eta)\nu(\z),\quad \z\in\D.
    \end{split}
    \end{equation*}
On the other hand, since $\om,\nu\in\R$, Lemma~\ref{le:nec3}, Lemma~\ref{Lemma:weights-in-R} and the hypothesis $M_p(\om,\nu,\eta)<\infty$ yield
    \begin{equation*}
    \begin{split}
    &\int_0^{|\z|}
    \left(\int_0^{r}\left(\int_t^1\left(\frac{\om(s)}{\nu(s)}\right)^{p'}\nu(s)\,ds\right)^{\frac{1}{p'}}\frac{dt}{\widehat{\om}(t)(1-t)}\right)^{p/p'}
    \left(\int_0^{r|\z|}\frac{dx}{\widehat{\om}(x)(1-x)}\right)\eta(r)r\,dr\\
    &\lesssim\int_0^{|\z|}
    \left(\int_r^1\left(\frac{\om(s)}{\nu(s)}\right)^{p'}\nu(s)\,ds\right)^{\frac{p}{(p')^2}}\frac{1}{\widehat{\om}(r)^{p-1}}
    \left(\int_0^{r}\frac{dx}{\widehat{\om}(x)(1-x)}\right)\eta(r)\,dr\\
    &\lesssim M^{\frac{p}{p'}}_p(\om,\nu,\eta)\int_0^{|\z|}
    \frac{\eta(r)}{\widehat{\om}(r)^{p}\left(\int_0^r
    \frac{\eta(s)}{\widehat{\om}(s)^p}\,ds\right)^{\frac{1}{p'}}}dr\asymp
    M^{\frac{p}{p'}}_p(\om,\nu,\eta) \left( \int_0^{|\z|}
    \frac{\eta(s)}{\widehat{\om}(s)^p}\,ds\right)^{\frac{1}{p}},\quad \z\in\D.
    \end{split}
    \end{equation*}
This together with the hypothesis $M_p(\om,\nu,\eta)<\infty$ gives
    \begin{equation*}
    \begin{split}
    &h^p(\z)\int_0^{|\z|}
    \left(\int_0^{r}\left(\int_t^1\left(\frac{\om(s)}{\nu(s)}\right)^{p'}\nu(t)\,dt\right)^{\frac{1}{p'}}\frac{dt}{\widehat{\om}(t)(1-t)}\right)^{p/p'}
    \left(\int_0^{r|\z|}\frac{dx}{\widehat{\om}(x)(1-x)}\right)\eta(r)r\,dr\\
    &\lesssim M^{\frac{p}{p'}}_p(\om,\nu,\eta)\nu(\z)
    \left(\int_{|\z|}^1\left(\frac{\om(s)}{\nu(s)}\right)^{p'}\nu(s)\,ds\right)^{\frac{1}{p'}}
    \left(\int_0^{|\z|}
    \frac{\eta(s)}{\widehat{\om}(s)^p}\,ds\right)^{\frac{1}{p}}
    \lesssim M^{p}_p(\om,\nu,\eta)\nu(\z),\quad \z\in\D.
    \end{split}
    \end{equation*}
Consequently, by combining the previous estimates, we deduce that the third last term in \eqref{eq:mu4} is bounded by a constant times $M^{p}_p(\om,\nu,\eta)\|f\|_{L^p_\nu}^p$. Essentially the same reasoning shows that $I_2(f)$ obeys the same upper bound. Thus, by putting everything together, we see that $\|P^+_\om(f)\|_{L^p_\eta}\lesssim M_p(\om,\nu,\eta)\|f\|_{L^p_\nu}$ as claimed.

\section{Proof of Theorem~\ref{th:Pthreeweights}}

Trivially (i) implies (ii), and $\|P_\om^+\|_{L^p_\nu\to L^p_\eta}\lesssim M_p(\om,\nu,\eta)$ by Theorem~\ref{th:P+threeweights}. Since the hypothesis \eqref{Eq:hypothesis:P} guarantees $M_p(\om,\nu,\eta)\lesssim N_p(\om,\nu,\eta)$, it suffices to show that $N_p(\om,\nu,\eta)\lesssim \|P_\om\|_{L^p_\nu\to L^p_\eta}$. The following result gives this implication and completes the proof of Theorem~\ref{th:Pthreeweights}.

\begin{proposition}
Let $1<p<\infty$, $\om\in\DD$, $\eta\in\DDD$ and $\nu$ a radial weight. If $P_\om : L^p_\nu \to L^p_\eta$ is bounded, then
    $$
    \sup_{0<r<1}\frac{\widehat{\eta}\left(r\right)^{\frac1p}}{\widehat{\om}\left(r\right)}\left(
    \int_{r}^1\left(\frac{\om(t)}{\nu(t)}\right)^{p'}\nu(t)\,tdt\right)^\frac1{p'}\lesssim\|P_\om\|_{L^p_\nu\to L^p_\eta}.
    $$
\end{proposition}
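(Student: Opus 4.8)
\emph{Plan.} The plan is to test the boundedness hypothesis against the one-parameter family of \emph{single-frequency} functions
$$
f_{N,g}(\zeta)=e^{iN\arg\zeta}\,g(|\zeta|),\qquad g\ge0\ \text{radial and supported on }[r,1),
$$
where the nonnegative integer $N=N(r)$ will be tuned to $r$. A \emph{radial} $f$ would be useless here, because $P_\om$ annihilates all nonzero Fourier modes and thus would only see the constant term of the kernel; the factor $e^{iN\arg\zeta}$ extracts instead the $N$-th mode. Since $\om$ is radial, $B^\om_z(\zeta)=\sum_{k\ge0}\bar z^{\,k}\zeta^k/(2\om_{2k+1})$ (recall $\sigma_x=\int_0^1 s^x\sigma(s)\,ds$, as in Lemma~\ref{Lemma:weights-in-D-hat-NEW}(iii)), so a polar-coordinate computation gives the single monomial
$$
P_\om(f_{N,g})(z)=\frac{z^N}{\om_{2N+1}}\int_r^1 g(\rho)\,\rho^{N+1}\om(\rho)\,d\rho .
$$
Hence $\|P_\om(f_{N,g})\|_{L^p_\eta}^p=2\,\eta_{Np+1}\,\om_{2N+1}^{-p}\bigl(\int_r^1 g\rho^{N+1}\om\,d\rho\bigr)^p$ while $\|f_{N,g}\|_{L^p_\nu}^p=2\int_r^1 g(\rho)^p\rho\nu(\rho)\,d\rho$, and boundedness of $P_\om$ rearranges to
$$
\|P_\om\|_{L^p_\nu\to L^p_\eta}^p\ \ge\ \frac{\eta_{Np+1}}{\om_{2N+1}^p}\cdot\frac{\bigl(\int_r^1 g\rho^{N+1}\om\,d\rho\bigr)^p}{\int_r^1 g^p\rho\nu\,d\rho}.
$$

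Next I would optimize the last ratio over $g$. With $a=\rho^{N+1}\om$ and $b=\rho\nu$ the extremal choice is $g^\ast=(a/b)^{1/(p-1)}$, for which $\int_r^1 ag^\ast\,d\rho=\int_r^1 b(g^\ast)^p\,d\rho=\int_r^1\rho^{(N+1)p'-p'/p}\bigl(\om(\rho)/\nu(\rho)^{1/p}\bigr)^{p'}d\rho$. As $g^\ast$ need not lie in $L^p_\nu$, I would run the argument with the truncations $g_n=\min\{n,g^\ast\}\chi_{[r,1)}$ and pass to the limit by monotone convergence, exactly as in the proof of Proposition~\ref{pr:necessity3d}; if the limiting integral were infinite the ratio above would tend to infinity, contradicting boundedness, so it is finite. (As there, one first observes that if $\nu$ vanishes on a radial set of positive measure, testing against its indicator forces $\om$ to vanish there too, so $\om/\nu$ may be taken finite a.e.) This yields
$$
\|P_\om\|_{L^p_\nu\to L^p_\eta}^p\ \gtrsim\ \frac{\eta_{Np+1}}{\om_{2N+1}^p}\left(\int_r^1\rho^{(N+1)p'-\frac{p'}{p}}\Bigl(\frac{\om(\rho)}{\nu(\rho)^{1/p}}\Bigr)^{p'}d\rho\right)^{p/p'}.
$$

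The decisive step is the choice of $N$. For $r\in[\tfrac12,1)$ take $N=N(r)=\lceil 1/(1-r)\rceil$. Then $(N+1)(1-r)\asymp1$, whence $r^{(N+1)p'}\gtrsim1$, so $\rho^{(N+1)p'-p'/p}\asymp1$ on $[r,1)$, and the integral above is $\asymp\bigl(\int_r^1(\om/\nu^{1/p})^{p'}d\rho\bigr)^{p/p'}\asymp\bigl(\int_r^1(\om/\nu)^{p'}\nu\,t\,dt\bigr)^{p/p'}$ since $\rho\asymp1$ there. The same $N$ makes $1-\tfrac1{2N+1}\ge r$ and places $1-\tfrac1{Np+1}$ at a point whose distance to $1$ is comparable to $1-r$, so Lemma~\ref{Lemma:weights-in-D-hat-NEW}(iii),(ii) applied to $\om,\eta\in\DD$ give $\om_{2N+1}\le C\widehat{\om}(1-\tfrac1{2N+1})\le C\widehat{\om}(r)$ and $\eta_{Np+1}\ge(1-\tfrac1{Np+1})^{Np+1}\widehat{\eta}(1-\tfrac1{Np+1})\gtrsim\widehat{\eta}(r)$. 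Combining these, $\|P_\om\|_{L^p_\nu\to L^p_\eta}\gtrsim\widehat{\eta}(r)^{1/p}\widehat{\om}(r)^{-1}\bigl(\int_r^1(\om/\nu)^{p'}\nu\,t\,dt\bigr)^{1/p'}$ for $r\ge\tfrac12$. For $r\in(0,\tfrac12)$ the quantities $\widehat{\eta}(r)$ and $\widehat{\om}(r)$ are trapped between positive constants, so it suffices to bound $\bigl(\int_r^1(\om/\nu)^{p'}\nu\,t\,dt\bigr)^{1/p'}$ by $\|P_\om\|$; here I would take instead $N=0$, for which the exponent $(N+1)p'-p'/p$ equals exactly $1$, so the last display reads $\|P_\om\|^p\gtrsim\eta_1\om_1^{-p}\bigl(\int_r^1 t\,(\om/\nu^{1/p})^{p'}dt\bigr)^{p/p'}$, which is precisely what is required. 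Taking the supremum over $r\in(0,1)$ finishes the proof.

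I expect the genuine difficulty to lie in the first two steps: choosing a test family that survives the possible sign changes and zeros of $B^\om_z$ while remaining explicitly projectable (hence the move from radial test functions to the modes $e^{iN\arg\zeta}g$), and then finding the single scale $N\asymp1/(1-r)$ that simultaneously forces $\rho^{(N+1)p'-p'/p}\asymp1$ on $[r,1)$, $\om_{2N+1}\asymp\widehat{\om}(r)$ and $\eta_{Np+1}\asymp\widehat{\eta}(r)$. Once these are arranged, the Hölder optimization in $g$ and the truncation/limit step are routine and parallel the corresponding steps for $P^+_\om$.
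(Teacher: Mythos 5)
Your proof is correct, and it reaches the paper's key inequality by the dual route. The paper computes the adjoint $P^\star_\om:L^{p'}_\eta\to L^{p'}_\nu$ explicitly (which requires a Fubini justification via the kernel estimate and $\eta\in\Dd$, inequality \eqref{3}) and then tests $P^\star_\om$ on the monomials $\z^n$; since $\|P^\star_\om(\z^n)\|_{L^{p'}_\nu}=\sup_f|\langle P_\om f,\z^n\rangle_{L^2_\eta}|/\|f\|_{L^p_\nu}$, your H\"older optimization over radial profiles $g$ in the $N$-th Fourier mode is precisely the primal version of that same computation: both exploit the diagonal action of $P_\om$ on modes, both reduce to the moment inequality $\eta_{2n+1}^{p'}\om_{2n+1}^{-p'}\int_0^1 t^{np'+1}(\om/\nu)^{p'}\nu\,dt\lesssim\|P_\om\|^{p'}\eta_{np'+1}$ (in your normalization, $\eta_{Np+1}\om_{2N+1}^{-p}(\int_r^1\rho^{(N+1)p'-p'/p}(\om/\nu^{1/p})^{p'}d\rho)^{p/p'}\lesssim\|P_\om\|^p$, which is the same bound after relabeling exponents), and both finish with Lemma~\ref{Lemma:weights-in-D-hat-NEW}(ii),(iii) and the choice $n\asymp1/(1-r)$. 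What your version buys is the avoidance of the adjoint formula and of estimate \eqref{3} altogether, so you only use $\eta\in\DD$ rather than $\eta\in\DDD$, and the restriction of the test profile to $[r,1)$ replaces the paper's step of discarding $\int_0^{1-1/n}$; what it costs is the truncation/monotone-convergence argument for the extremal $g^\ast=(a/b)^{1/(p-1)}$ and a (routine) justification of term-by-term integration of the kernel series, both of which you handle correctly. The separate treatment of $r<1/2$ via $N=0$ matches the paper's closing remark that the integral converges at $r=0$ by the case $n=0$.
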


\begin{proof}
The adjoint of $P_\om$ is defined by
    \begin{equation*}
    \langle P_\om(f),g\rangle_{L^2_\eta}
    =\langle f,P^\star_\om(g)\rangle_{L^2_\nu}, \quad f\in L^p_\nu,\quad  g\in  L^{p'}_\eta.
    \end{equation*}
Now \cite[Theorem~1(i)]{PelRatKernels} and Lemma~\ref{Lemma:weights-in-R}, applied to $\eta\in\Dd$, yield
    \begin{equation}\label{3}
    \begin{split}
    &\int_{\D}\left(\int_\D|B^\om_\z(z)|\om(\z)\,dA(\z)\right)\eta(z)\,dA(z)
    \lesssim\int_0^1\eta(r)\log\frac{e}{1-r}\,dr
    =\int_0^1\frac{\widehat{\eta}(r)}{1-r}\,dr<\infty.
    \end{split}
    \end{equation}
If $f$ and $g$ are bounded functions, then \eqref{3} shows that we may apply Fubini's theorem to deduce
    \begin{equation*}
    \begin{split}
    \langle P_\om(f),g\rangle_{L^2_\eta}
    &=\int_{\D} P_\om(f)(z)\overline{g(z)}\eta(z)\,dA(z)\\
    &=\int_{\D}\left(\int_\D f(\z)B^\om_\z(z)\om(\z)\,dA(\z)\right)\overline{g(z)}\eta(z)\,dA(z)\\
    &=\int_{\D}f(\z)\left(\int_\D\overline{g(z)}B^\om_\z(z)\eta(z)\,dA(z)\right)\om(\z)\,dA(\z)\\
    &=\int_{\D}f(\z)\overline{\left(\frac{\om(\z)}{\nu(\z)}\int_\D g(z)B^\om_z(\z)\eta(z)\,dA(z) \right)} \nu(\z)\,dA(\z)
    =\langle f,P^\star_\om(g)\rangle_{L^2_\nu}.
    \end{split}
    \end{equation*}
Since the simple functions are dense in $L^p_\sigma$ for each $1\le p<\infty$ and radial $\sigma$, this shows that
    \begin{equation*}
    P^\star_\om(g)(\z)=\frac{\om(\z)}{\nu(\z)}\int_{\D}g(z)B^\om_z(\z)\eta(z)\,dA(z),\quad \z\in\D,\quad g\in L^{p'}_\eta.
    \end{equation*}
The adjoint operator $P^\star_\om:L^{p'}_\eta\to L^{p'}_\nu$ is bounded by the hypothesis, and $\|P^\star_\om\|_{L^{p'}_\eta \to L^{p'}_\nu} = \|P_\om\|_{L^p_\nu \to L^p_\eta}$. Thus
    \begin{equation}\label{eq:adjointbdd}
    \int_\D \left(\frac{\om(\zeta)}{\nu(\zeta)}\right)^{p'} \left|\int_\D g(z) B^\om_z(\zeta) \eta(z)\,dA(z)\right|^{p'} \nu(\zeta)\,dA(\zeta)
    \le\|P_\om\|^{p'}_{L^p_\nu \to L^p_\eta}\|g\|_{L^{p'}_\eta}^{p'},\quad g \in L^{p'}_\eta.
    \end{equation}
By considering the standard orthonormal basis $\{z^j/\sqrt{2\om_{2j+1}}\}$, $j\in\N\cup\{0\}$, of the Hilbert space $A^2_\om$, one deduces
    $$
    B^\om_z(\zeta)=\sum_{n=0}^\infty\frac{(\zeta\overline{z})^n}{2\om_{2n+1}},\quad z,\zeta\in\D.
    $$
By testing \eqref{eq:adjointbdd} with monomials $g_n(\z)=\z^n$ we obtain
    \begin{equation}\label{4}
    \begin{split}
    \left(\frac{\eta_{2n+1}}{\om_{2n+1}}\right)^{p'}\int_\D|\z|^{np'}\left(\frac{\om(\z)}{\nu(\z)}\right)^{p'}\nu(\z)\,dA(\z)
    &\asymp\|P_\om^\star(g_n)\|_{L^{p'}_\eta}^{p'}
    \le\|P_\om\|^{p'}_{L^p_\nu \to L^p_\eta}\|g_n\|_{L^{p'}_\eta}^{p'}\\
    &=2\|P_\om\|^{p'}_{L^p_\nu \to L^p_\eta}\eta_{np'+1},\quad n\in\N\cup\{0\},
    \end{split}
    \end{equation}
from which Lemma~\ref{Lemma:weights-in-D-hat-NEW}(ii)(iii), applied to $\om,\eta\in\DD$, yields
    \begin{equation*}
    \begin{split}
    \|P_\om\|^{p'}_{L^p_\nu \to L^p_\eta}
    &\gtrsim\sup_{n\in\N}\left(\frac{\eta^{p'}_{2n+1}}{\om^{p'}_{2n+1}\eta_{np'+1}}\int_0^1t^{np'}\left(\frac{\om(t)}{\nu(t)}\right)^{p'}\nu(t)t\,dt\right)\\
    &\asymp\sup_{n\in\N}\left(\frac{\widehat{\eta}\left(1-\frac1n\right)^{p'-1}}{\widehat{\om}\left(1-\frac1n\right)^{p'}}
    \int_0^1t^{np'}\left(\frac{\om(t)}{\nu(t)}\right)^{p'}\nu(t)\,tdt\right)\\
    &\gtrsim\sup_{n\in\N\setminus\{1\}}\left(\frac{\widehat{\eta}\left(1-\frac1n\right)^{p'-1}}{\widehat{\om}\left(1-\frac1n\right)^{p'}}
    \int_{1-\frac1n}^1\left(\frac{\om(t)}{\nu(t)}\right)^{p'}\nu(t)\,tdt\right).
    \end{split}
    \end{equation*}
Let $\frac12\le r<1$ and fix $n\in\N$ such that $1-\frac1n\le r<1-\frac{1}{n+1}$. By applying Lemma~\ref{Lemma:weights-in-D-hat-NEW}(ii) again we finally deduce
    \begin{equation*}
    \begin{split}
    \|P_\om\|_{L^p_\nu \to L^p_\eta}
    \gtrsim\frac{\widehat{\eta}\left(r\right)^{\frac1p}}{\widehat{\om}\left(r\right)}\left(
    \int_{r}^1\left(\frac{\om(t)}{\nu(t)}\right)^{p'}\nu(t)\,tdt\right)^\frac1{p'},\quad \frac12\le r<1.
    \end{split}
    \end{equation*}
The assertion follows from this inequality because the last integral converges for $r=0$ by \eqref{4} with $n=0$.
\end{proof}


\begin{thebibliography}{99}

\bibitem{ACJFA12}   A.~Aleman and O.~Constantin, The Bergman projection on vector-valued $L^2$-spaces with operator-valued weights,
                    J. Funct. Anal. 262, (2012) no. 5, 2359--2378.

\bibitem{AlPoRe}   A.~Aleman, S.~Pott and M.C. Reguera, Sarason conjecture on the Bergman space,
                    Int. Math. Res. Not. IMRN 2017, no. 14, 4320--4349.

\bibitem{B1981}         D.~Bekoll\'e,
                        In\'egalit\'es \'a poids pour le projecteur de Bergman dans la boule unit\'e de $\C^n$
                        [Weighted inequalities for the Bergman projection in the unit ball of $\C^n$],
                        Studia~Math.~71 (1981/82), no. 3, 305--323.

\bibitem{BB}            D.~Bekoll\'e and A.~Bonami,
                        In\'egalit\'es \'a poids pour le noyau de Bergman, (French)
                        C.~R.~Acad.~Sci.~Paris~Sr.~A--B~286 (1978), no. 18, 775--778.

\bibitem{HyAn12}         T.~Hyt\"onen, The sharp weighted bound for general Calder\'on-Zygmund operators,
                                       Ann. of Math. (2) 175 (2012), no. 3, 1473--1506.


 \bibitem{Laceyetal14}                                M.~Lacey, E.~Sawyer, S.~Chun-Yen and I.~Uriarte-Tuero,
                                   Two-weight inequality for the Hilbert transform: a real variable characterization, I.
                                   Duke Math. J. 163 (2014), no. 15, 2795--2820.

 \bibitem{LaceyWicketal18}                                   M.~Lacey, E.~Sawyer, S.~Chun-Yen, I.~Uriarte-Tuero and B.~D.~Wick,
                                   Two Weight Inequalities for the Cauchy Transform from $\mathbb{R}$ to $\mathbb{C}^+$,
                                   https://arxiv.org/abs/1310.4820

\bibitem{PelSum14}              J.~A.~ Pel\'aez, Small weighted Bergman spaces,
                                Proceedings of the summer school in complex and harmonic analysis, and related topics, (2016).

\bibitem{PR2014Memoirs}         J.~Pel\'aez and J.~R\"atty\"a,
                                Weighted Bergman spaces induced by rapidly increasing weights,
                                Mem.~Amer.~Math.~Soc.~227 (2014).

\bibitem{PelRatKernels}         J. A. Pel\'aez and J. R\"atty\"a,
                                Two weight inequality for Bergman projection,
                                J. Math. Pures Appl.~(9) 105 (2016), no~1, 102--130.

\bibitem{PRW}                   J. A. Pel\'aez,  J. R\"atty\"a and B.~D.~Wick,
                                Bergman projection induced by kernel with integral representation,
                                to appear in J. Anal. Math.,     https://arxiv.org/abs/1606.00718

\bibitem{PR2018}     J. A. Pel\'aez and J. R\"atty\"a, Weighted Bergman projections on $L^\infty$, preprint.

\bibitem{PottRegueraJFA13}    S.~Pott and M.~C.~Reguera, Sharp Bekoll\'e estimate for the Bergman projection,
                              J. Funct. Anal. 265 (2013), 3233--3244.

\end{thebibliography}
\end{document}